\title{Generic Density\textbf{} of Periodic Orbits of Area-preserving Maps on Punctured Surfaces}
\author{Shaoyang Zhou}
\date{}
\numberwithin{equation}{section}
\newtheorem{thm}{Theorem}[section]
\newtheorem{prop}[thm]{Proposition}
\newtheorem{cor}[thm]{Corollary}
\newtheorem{lem}[thm]{Lemma}
\newtheorem{claim}[thm]{Claim}
\newtheorem{quest}[thm]{Question}
\theoremstyle{definition}
\newtheorem{rem}[thm]{Remark}
\newcommand{\End}{\mathrm{End}}
\newcommand{\cH}{\mathcal{H}}
\def\G{{\mathcal G}}
\def\N{{\mathbb N}}
\def\Z{{\mathbb Z}}
\def\cZ{{\mathcal Z}}
\def\Q{{\mathbb Q}}
\def\R{{\mathbb R}}
\def\cC{{\mathcal C}}
\def\P{{\mathcal P}}
\def\O{{\mathcal O}}
\def\H{{\mathcal H}}
\def\M{{\mathcal M}}
\def\X{{\mathcal X}}
\def\lv{{\lvert}}
\def\rv{{\rvert}}
\newcommand{\Diff}{\textrm{Diff}}
\newcommand{\Homeo}{\textrm{Homeo}}
\newcommand{\Ham}{\textrm{Ham}}
\newcommand{\Tw}{\textrm{TwPFH}}
\def\Sp{{\text{Sp}}}
\newcommand{\defeq}{\vcentcolon=}
\newcommand\restr[2]{{
  \left.\kern-\nulldelimiterspace 
  #1 
  \littletaller 
  \right|_{#2} 
  }}
\newcommand{\littletaller}{\mathchoice{\vphantom{\big|}}{}{}{}}
\begin{document}

\setcounter{tocdepth}{2}

\begin{abstract} In this paper, we study the dynamics of area-preserving maps in a non-compact setting. We show that the \(C^{\infty}\)-closing lemma holds for area-preserving diffeomorphisms on a closed surface with finitely many points removed. As a corollary, a \( C^{\infty} \)-generic area-preserving diffeomorphism on such a surface has a dense set of periodic points. For area-preserving maps on a finitely punctured 2-sphere, we establish a more quantitative result regarding the equidistribution of periodic orbits. The proof of this result involves a PFH Weyl law for rational area-preserving homeomorphisms, which may be of independent interest.
\end{abstract}

\maketitle

\begin{section}{Introduction}
\subsection{Setting}

Let \(\Sigma\) be a smooth closed surface equipped with an area form \(\omega\). Assume that \(\int_{\Sigma}\, \omega = 1\). Fix a non-empty, finite collection of points \(P = \{p_1, \dots, p_k\}\) in \(\Sigma\), and define \(\Sigma_P = \Sigma \setminus P\) and \(\omega_0 = \omega|_{\Sigma_P}\). Let \(\Diff(\Sigma_P, \omega_0)\) be the group of area-preserving diffeomorphisms of \((\Sigma_P, \omega_0)\). We consider two topologies on \(\Diff(\Sigma_P, \omega_0)\): the strong \(C^{\infty}\)-topology and the \(C_{\text{loc}}^{\infty}\)-topology. The details of these topologies are discussed in \cite{Hirsch}. Roughly speaking, the \(C_{\text{loc}}^{\infty}\)-topology controls the closeness of two maps and their derivatives over compact sets, while the strong \(C^{\infty}\)-topology has control all the way up to infinity (i.e., the set of punctures). 

\subsection{Statements of results}
We establish the following results concerning the periodic orbits of area-preserving maps of \((\Sigma_P, \omega_0)\):

\begin{thm}[$C^{\infty}$-closing lemma]
\label{main thm1}
Fix $\phi \in \Diff(\Sigma_{P},\omega_0)$ and an open set $U \subset \Sigma_{P}$. For any open neighborhood $V$ of $\phi$ in the strong $C^{\infty}$-topology, there exists $\phi' \in V$ such that $\phi'$ has a periodic orbit in $U$. 
\end{thm}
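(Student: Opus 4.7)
The plan is to reduce Theorem~\ref{main thm1} to the $C^{\infty}$-closing lemma for closed area-preserving surfaces, established by Asaoka--Irie in the Hamiltonian setting and by Cristofaro-Gardiner--Prasad--Zhang and Edtmair--Hutchings for general area-preserving diffeomorphisms. I look only for perturbations of the form $\phi' = \phi \circ \psi$, where $\psi$ is a compactly supported area-preserving diffeomorphism of $\Sigma_P$ that is $C^{\infty}$-close to $\mathrm{id}$. The first step is localization: since $\Sigma_P$ is a manifold, $U$ contains a nonempty open subset $U_0$ with $\overline{U_0}$ compact in $\Sigma_P$, and it suffices to produce $\phi' \in V$ with a periodic orbit in $U_0$. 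Fix a compact subsurface-with-boundary $K \subset \Sigma_P$ with $\overline{U_0} \subset \mathrm{int}(K)$. By the definition of the strong $C^{\infty}$-topology, there is an open neighborhood $W$ of $\mathrm{id}$ in the group $\Diff_{c}(\mathrm{int}(K), \omega_0)$, equipped with its $C^{\infty}$-topology, such that $\phi \circ W \subset V$: elements of $\phi \circ W$ coincide with $\phi$ outside $K$, hence are as close to $\phi$ as desired to all orders near every puncture.

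\smallskip

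The main step is to associate to $(\phi, K)$ a closed-surface model. Specifically, I want a closed surface $\widetilde{\Sigma}$ with area form $\widetilde{\omega}$, a symplectic embedding $\iota \colon (K, \omega_0) \hookrightarrow (\widetilde{\Sigma}, \widetilde{\omega})$, and an area-preserving diffeomorphism $\widetilde{\phi} \in \Diff(\widetilde{\Sigma}, \widetilde{\omega})$ such that: (a) every $\psi \in W$ extends via $\iota$ and the identity outside $\iota(K)$ to a $C^{\infty}$-small area-preserving perturbation $\widetilde{\psi}$ of $\mathrm{id}$ on $\widetilde{\Sigma}$; and (b) every periodic orbit of $\widetilde{\phi} \circ \widetilde{\psi}$ contained in $\iota(U_0)$ pulls back via $\iota^{-1}$ to a periodic orbit of $\phi \circ \psi$ in $U_0$. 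A natural candidate is to take $\widetilde{\Sigma}$ to be the topological double of a large compact subsurface $\Sigma' \subset \Sigma_P$ containing $K$ in its interior, and to define $\widetilde{\phi}$ by combining the first-return map of $\phi$ to $\Sigma'$ with its symmetric extension across the doubling seam. Granted such a $\widetilde{\phi}$, the closed-surface $C^{\infty}$-closing lemma applied to $\widetilde{\phi}$ and $\iota(U_0) \subset \widetilde{\Sigma}$ produces some $\widetilde{\psi}$ supported in $\iota(\mathrm{int}(K))$ with $\widetilde{\phi} \circ \widetilde{\psi}$ having a periodic orbit in $\iota(U_0)$; pulling back yields $\psi \in W$ such that $\phi \circ \psi \in V$ has a periodic orbit in $U_0 \subset U$.

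\smallskip

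The main obstacle is constructing $\widetilde{\phi}$ satisfying both (a) and (b). Property (a) is largely bookkeeping: it requires that $\widetilde{\omega}$ extend $\iota_{*}\omega_0$ smoothly and that the doubling construction be compatible with $C^{\infty}$-small compactly supported perturbations. Property (b) is the delicate point: the closed-surface reduction must not introduce spurious periodic orbits in $\iota(U_0)$ arising from the first-return construction itself rather than from genuine returns of $\phi$. Controlling this will likely require a careful analysis of how $\phi$-orbits cross $\partial \Sigma'$ and of the interplay between the first-return dynamics and the doubling seam, and I expect this to be where the bulk of the technical work lies.
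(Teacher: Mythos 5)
Your proposal diverges substantially from the paper's proof, and it has a genuine gap at its core.

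The paper does \emph{not} replace $\Sigma_P$ by a closed surface built from doubling a compact subsurface. Instead it extends $\phi$ to an area-preserving \emph{homeomorphism} $\Phi$ of the original closed surface $\Sigma$ with $\Phi(P)=P$, proves (Lemma~\ref{fine approximation}) that $\Phi$ is the $C^0$-limit of a sequence of smooth maps $\phi_n$ lying in a single Hamiltonian isotopy class, perturbs $\Phi$ by a compactly supported symplectic vector field to make that class rational, and then applies the \emph{quantitative} closing lemma of Edtmair--Hutchings (with the $U$-cycle property from \cite{CGPPZ}) to each $\phi_n$. The crucial point is that the quantitative statement produces orbits of $\phi_n\circ\varphi_H^{\tau_n}$ of \emph{uniformly bounded period} $d(a,l)$, which only depends on $a$, $l$, and the (fixed) Hamiltonian isotopy class. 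One can then pass to a $C^0$-subsequential limit and obtain a genuine periodic orbit of $\Phi'\circ\varphi_H^\tau$ in $\overline U$; since $X$ and $H$ are compactly supported in $\Sigma_P$, the restricted map $\phi'$ lies in $V$. The qualitative closing lemma alone would not suffice here, because the period bound could a priori blow up with $n$ and the limit orbit could degenerate.

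The gap in your proposal is in the construction of $\widetilde\phi$. The first-return map of $\phi$ to a compact subsurface $\Sigma'$ is \emph{not} smooth: the return time is a locally constant but globally discontinuous integer-valued function, and the first-return map has jump discontinuities along the boundaries of its level sets, so it cannot serve as an area-preserving \emph{diffeomorphism} on the double. (It also need not be everywhere defined: some points of $\Sigma'$ may never return.) Even setting smoothness aside, the correspondence you need in property~(b) fails in the relevant direction. The closed-surface closing lemma applied to $\widetilde\phi\circ\widetilde\psi$ yields a periodic orbit that merely \emph{passes through} $\iota(U_0)$; nothing constrains the rest of that orbit to stay in $\iota(K)$, avoid the doubling seam, or avoid the mirror copy. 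Such an orbit may spend most of its time in regions of $\widetilde\Sigma$ where $\widetilde\phi$ has no dynamical relation to $\phi$, so there is no map sending it back to a periodic orbit of $\phi\circ\psi$. Your proposal acknowledges (b) as ``delicate,'' but it is not a technicality that can be cleaned up by a more careful analysis of how orbits cross $\partial\Sigma'$; it is an obstruction to the reduction itself. The paper sidesteps all of this by never changing the ambient surface: orbits of $\Phi$ avoiding $P$ are literally orbits of $\phi$, so the only analytic difficulty is controlling the passage to the $C^0$-limit, which is exactly what Lemma~\ref{fine approximation} and the uniform period bound from the quantitative closing lemma provide.
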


As \(\Diff(\Sigma_P, \omega_0)\) is a Baire space under the strong \(C^{\infty}\)-topology, applying the argument of \cite[Corollary 1.2]{Asaoka-Irie} yields the following corollary.

\begin{cor}[Generic density theorem]
A generic element of $\Diff(\Sigma_P,\omega_0)$ (in the strong $C^{\infty}$-topology) has a dense set of periodic points. 
\end{cor}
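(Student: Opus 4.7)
The plan is to deduce the corollary from Theorem \ref{main thm1} by the classical Baire category route, exactly as in \cite[Corollary 1.2]{Asaoka-Irie}. Since $\Sigma_P$ is a second countable manifold, fix a countable basis of nonempty open sets $\{U_n\}_{n\ge 1}$; the claim that a generic $\phi$ has dense periodic points is equivalent to the statement that, for each $n$, the set
\[
\mathcal{V}_n \defeq \{\phi \in \Diff(\Sigma_P,\omega_0) : \phi \text{ has a periodic point in } U_n\}
\]
contains an open dense subset of $\Diff(\Sigma_P,\omega_0)$ in the strong $C^\infty$-topology. Since $\Diff(\Sigma_P,\omega_0)$ is a Baire space under this topology, the residual intersection $\bigcap_n \mathrm{int}(\mathcal{V}_n)$ will then consist of area-preserving diffeomorphisms whose periodic points meet every basis element, and hence are dense in $\Sigma_P$.

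Density of $\mathcal{V}_n$ itself is immediate from Theorem \ref{main thm1}: for any $\phi$ and any strong $C^\infty$-neighborhood $W$ of $\phi$, the closing lemma produces $\phi' \in W$ with a periodic point in $U_n$, so $\phi' \in \mathcal{V}_n \cap W$. To upgrade this to density of $\mathrm{int}(\mathcal{V}_n)$, I would follow up with a further compactly supported Hamiltonian perturbation of $\phi'$ that makes the created periodic orbit $\mathcal{O} \subset U_n$ nondegenerate, i.e.\ the linearized return map along $\mathcal{O}$ has no eigenvalue equal to $1$. Such a perturbation is standard: inside a Darboux chart around a single point of $\mathcal{O}$, add a $C^\infty$-small Hamiltonian whose time-one map adjusts the differential of the return map by a prescribed symplectic matrix, which generically removes eigenvalue $1$. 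The implicit function theorem then guarantees that $\mathcal{O}$ persists under any sufficiently small strong $C^\infty$-perturbation, with the continued orbit remaining in $U_n$, placing the perturbed diffeomorphism in $\mathrm{int}(\mathcal{V}_n)$.

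The one point requiring care is the nondegeneracy perturbation, which must preserve $\omega_0$ and be small in the \emph{strong} $C^\infty$-topology rather than just $C^\infty_{\mathrm{loc}}$. Because this perturbation can be chosen compactly supported inside an arbitrarily small disk in $\Sigma_P$, uniformly away from the puncture set $P$, strong $C^\infty$-smallness is automatic, and area-preservation is automatic for a Hamiltonian time-one map. Combining the closing perturbation with the nondegeneracy perturbation exhibits an open dense subset of $\mathcal{V}_n$, and taking the countable intersection over the basis $\{U_n\}$ completes the proof.
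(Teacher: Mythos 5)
Your proposal is correct and follows essentially the same route the paper takes: the paper simply cites the Baire category argument of Asaoka--Irie together with the fact that $\Diff(\Sigma_P,\omega_0)$ is Baire in the strong $C^\infty$-topology, and your write-up is a correct fleshing out of exactly that argument (density of each $\mathcal{V}_n$ from Theorem \ref{main thm1}, openness obtained by a compactly supported Hamiltonian perturbation making the created orbit nondegenerate so it persists). Your remark that compact support away from $P$ makes the perturbation automatically small in the strong topology is precisely the point that makes the non-compact setting unproblematic here.
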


When \(\Sigma = S^2\), we also prove a result regarding the equidistribution of periodic orbits. The relevant terminologies are defined in Section 2.

\begin{thm}
\label{thm2}
A $C_{\text{loc}}^{\infty}$-dense element of $\Diff(S^2_P,\omega_0)$ has an equidistributed sequence of orbit sets. 
\end{thm}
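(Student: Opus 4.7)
My plan is to reduce the equidistribution to a PFH Weyl law for a suitable class of area-preserving homeomorphisms of $S^2$, adapting the strategy that yields the analogous result on closed surfaces. First, I would identify each $\phi \in \Diff(S^2_P, \omega_0)$ with an area-preserving homeomorphism $\overline{\phi} \in \Homeo(S^2, \omega)$ obtained by extending $\phi$ continuously so as to fix every puncture. Within $\Homeo(S^2, \omega)$ I then single out the subclass of \emph{rational} homeomorphisms (as defined in Section 2), for which the PFH spectral invariants $c_d$, $d \in \N$, can be defined.

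The heart of the argument is a PFH Weyl law for such rational homeomorphisms, asserting that for every compactly supported time-dependent Hamiltonian $H$ on $S^2_P$,
\[
\lim_{d \to \infty} \frac{c_d(\overline{\phi} \circ \phi_H^1) - c_d(\overline{\phi})}{d} = \int_{[0,1]\times S^2_P} H \, \omega_0\, dt,
\]
where $\phi_H^1$ denotes the time-one map of the Hamiltonian flow of $H$. Once this asymptotic is available, equidistribution is extracted in the now-standard way: each $c_d$ is approximately realized by a PFH generator encoded as a finite union of periodic orbits of $\phi$ (an orbit set), and testing the Weyl law against Hamiltonians supported in arbitrarily small balls forces the empirical measures of a subsequence of these orbit sets to converge weak-$\ast$ to $\omega_0$.

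The density statement of the theorem then follows from the $C^{\infty}_{\text{loc}}$-density of rational area-preserving diffeomorphisms in $\Diff(S^2_P, \omega_0)$, which can be arranged by perturbing on compact subsets away from $P$. The principal obstacle is proving the Weyl law in this non-compact setting: because $\overline{\phi}$ is only a homeomorphism at the punctures, the closed-surface PFH theory cannot be invoked directly on it. I would establish the Weyl law by approximating $\overline{\phi}$ in a Hofer-like distance by smooth rational area-preserving diffeomorphisms of all of $S^2$, applying the closed-surface PFH Weyl law to each approximation, and proving continuity of the PFH spectral invariants under such approximations. Controlling the contribution of small neighborhoods of $P$ uniformly in $d$ is the technical heart of this step.
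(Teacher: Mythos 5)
Your high-level plan matches the paper's: extend $\phi$ to a homeomorphism of $S^2$, approximate by smooth area-preserving maps in a fixed Hamiltonian isotopy class, prove a PFH Weyl law in this limit, and then run the formal equidistribution argument of Prasad. But the step you describe as ``proving continuity of the PFH spectral invariants under such approximations'' is exactly the point where the naive approach breaks down, and the paper explicitly flags it as open (Questions~\ref{inner limit} and~\ref{exchange limits}). The approximating sequence $\{\phi_n\}$ converges only in $C^0$, whereas the Hofer--Lipschitz property (\ref{hofer-lipschitz}) gives control of $c_d$ only in terms of the Hamiltonian $C^0$-norm of the difference, which need not go to zero. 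There is no known continuity of PFH spectral invariants with respect to $C^0$-convergence of the maps, and your phrase ``approximating in a Hofer-like distance'' cannot be arranged for a general homeomorphism. The paper's workaround is structurally different: it replaces the single limit by a $\lim_{d}\limsup_{n}$, and eliminates the $n$-dependence up to an $O(1/d)$ error via a triangle-type inequality (\ref{triangle inequality}) for the PFH spectral invariants of the unit class, which in turn is derived from Chen's open--closed morphism comparing PFH and Heegaard Floer spectral invariants and, crucially, uses $U^d\sigma_{\heartsuit}^d = \sigma_{\diamond}^d$. This is the precise reason the result is stated only for $S^2$; your sketch does not identify this $S^2$-specific mechanism, nor why the argument would fail on higher-genus surfaces.

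A second gap is that you do not address periodic orbits sitting at the punctures. The paper works with Hamiltonians in $C^{\infty}(\R/\Z\times S^2;P)$ whose flows fix $P$, so the perturbed maps can (and generically do) have periodic orbits inside $P$, which must be removed from the orbit set while keeping the equidistribution estimates. This requires proving the parametric transversality Lemmas~\ref{parametric transversality} and~\ref{non-degeneracy} separately for orbits in $\Sigma_P$ and in $P$, and then estimating the ratio $\lvert\O^{S}\rvert/\lvert\O\rvert$ (equations (\ref{ratio estimate})--(\ref{ratio})) to show the discarded mass at the punctures is negligible. Your sketch skips this entirely, yet it is where the ``control near $P$'' difficulty you allude to actually manifests, and it is handled combinatorially rather than by a uniform-in-$d$ estimate near the punctures. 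Finally, a small but real issue: you write $c_d(\overline{\phi})$, but PFH spectral invariants are not defined directly for a homeomorphism; the paper always evaluates $c_d$ on the approximating Hamiltonians $H_n$ and then takes $\limsup_n$.
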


Historically, Pugh and Robinson \cite{Pugh-Robinson} established the \(C^1\)-closing lemma and the $C^1$-generic density for symplectic and volume-preserving diffeomorphisms on compact manifolds. Generalizing these results to higher orders of smoothness (for maps not necessarily symplectic or volume-preserving) is the subject of Smale's tenth question \cite{Samle}. In dimension two, several recent breakthroughs have been made Floer-theoretic techniques. Using Embedded Contact Homology (ECH), Asaoka and Irie \cite{Asaoka-Irie} show that the \(C^{\infty}\)-closing lemma holds for Hamiltonian diffeomorphisms on closed surfaces. Through developing a quantitative theory of the closely related Periodic Floer Homology (PFH), Cristofaro-Gardiner, Prasad, and Zhang \cite{CGPZ} extend Asaoka and Irie's result to all area-preserving maps on closed surfaces. A key step in their proof is to show that the \( C^{\infty} \)-closing lemma holds within the Hamiltonian isotopy class of a rational area-preserving diffeomorphism (see Section \ref{rational maps} for the definition). Assuming that such a class satisfies a PFH theoretic ``\( U \)-cycle property'', Edtmair and Hutchings \cite{EH} independently provides a quantitative version of this result, constructing a Hamiltonian perturbation that creates a periodic orbit of size \( O(\delta^{-1}) \) within time \( \delta \). Later, Cristofaro-Gardiner, Pomerleano, Prasad, and Zhang \cite{CGPPZ} showed that every rational Hamiltonian isotopy class on a closed surface satisfies the \( U \)-cycle property. Combining these results, one obtains a quantitative \( C^{\infty} \)-closing lemma for rational area-preserving maps on closed surfaces. This turns out to play a crucial role in our proof of Theorem \ref{main thm1}.

Subsequently, Prasad \cite{Prasad} showed that a \( C^{\infty} \)-generic area-preserving diffeomorphism on a closed surface has an equidistributed sequence of orbit sets. Prasad's theorem serves as a quantitative refinement of the closing lemma and was later generalized by Pirnapasov and Prasad \cite{PP} to area-preserving maps on compact surfaces. The proofs of these results also heavily rely on the quantitative theory of PFH.

\subsection{The PFH Weyl law for rational area-preserving homeomorphisms}

A key result in the quantitative theory of PFH is the PFH Weyl law \cite[Theorem 1.5]{CGPZ}, which describes the asymptotic behavior of PFH spectral invariants of a rational area-preserving diffeomorphism. These spectral invariants are numerical values derived from a version of PFH known as twisted PFH, where the chain complex is filtrated by the action values of the generators. In this paper, we formulate a PFH Weyl law for certain area-preserving homeomorphisms on closed surfaces (equation (\ref{Weyl law for homeomorphisms})). If this law holds, it has potential applications in studying the quantitative dynamics of area-preserving homeomorphisms. While we establish a variant of this law when \( \Sigma = S^2 \) (Theorem \ref{double limit Weyl law}), our method is unlikely to extend to surfaces of higher genus. As such, establishing or disproving this law is of independent interest and an intriguing direction for future study.

\subsection{Proof outline}

PFH is constructed for area-preserving diffeomorphisms on closed surfaces. To adapt PFH methods to our setting, we work with diffeomorphisms on \( \Sigma \) with \( P \) as the set of singularities, where they are only continuous. Indeed, for any \( \phi \in \Diff(\Sigma_{P}, \omega_0) \), one can extend it to an area-preserving homeomorphism \( \Phi \) of \( (\Sigma, \omega) \) with \( \Phi(P) = P \). It is well-known that \( \Phi \) is the \( C^0 \)-limit of a sequence \( \{\phi_n\}_{n \geq 1} \) in \( \Diff(\Sigma, \omega) \). The challenge, then, is to show that analogous results for closed surfaces hold under this limit. For Theorem \ref{main thm1}, we achieve this by first proving a finer approximation of $\Phi$ (Lemma \ref{fine approximation}), which shows that the sequence \( \{\phi_n\}_{n \geq 1} \) can be arranged to lie in the same Hamiltonian isotopy class. We then introduce the notion of a rational area-preserving homeomorphism and show, using the quantitative closing lemma from \cite{EH}, that the \( C^{\infty} \)-closing lemma holds for such homeomorphisms. For Theorem \ref{thm2}, we apply the PFH Weyl law for rational area-preserving homeomorphisms. We establish a variant of this law when $\Sigma = S^2$ (Theorem \ref{double limit Weyl law}) and combine it with a formal argument from \cite{Prasad}.

\subsection{Outline}

The paper is organized as follows. In Section 2, we define the relevant terminologies and collect some basic facts about PFH. In Section 3, we prove Lemma \ref{fine approximation} and Theorem \ref{main thm1}. In Section 4, we formulate the PFH Weyl law of area-preserving homeomorphisms and prove Theorem \ref{double limit Weyl law}. In section 5, we prove some technical transversality results which are needed in the proof of Theorem \ref{thm2}. In Section 6, we prove analogous results to \cite[Proposition 4.1 and 4.2]{Prasad} that produce ``nearly equidistributed" orbit sets of perturbations of $\Phi$. We then use these results to prove Theorem \ref{thm2}. 

\subsection{Acknowledgements}

I want to thank my advisor, Dan Cristofaro-Gardiner, for proposing this problem and giving me his extensive support. Given that this is my first project, Dan provided me with excellent advice, both mathematically and personally.  I am grateful to Rohil Prasad and Abror Pirnapasov for reviewing my preliminary results and offering their insights. I also want to thank Guanheng Chen, Kei Irie, and Boyu Zhang for answering my questions regarding their paper.

\end{section}

\begin{section}{Preliminaries}
\subsection{Orbits and orbit sets}
Let $\phi \in \Diff(\Sigma_{P}, \omega_{0})$. A \textit{periodic orbit} of $\phi$ is a finite set $S = \{x_1,\dots,x_d\} $ such that $\phi(x_i) = x_{i+1}$ for $i \in \{1,\dots,d-1\}$, and $\phi(x_d) = x_1$. We say $S$ is \textit{simple} if all $x_i$'s are distinct, and \textit{non-degenerate} if the linearized map \( D\phi^d|_{x_1}: T_{x_1}\Sigma \to T_{x_1}\Sigma \) does not have 1 as an eigenvalue. The map \( \phi \) is $d$-\textit{non-degenerate} if all its periodic orbits of period $\leq d$ are non-degenerate. We say $\phi$ is non-degenerate if it is $d$-non-degenerate for all $d \geq 1$. 

An \textit{orbit set} $\mathcal{O}$ of $\phi$ is a formal sum $\mathcal{O} = \sum_{k = 1}^{n}a_kS_k$, where $a_k \in \R_{>0}$ and each $S_k$ is an simple periodic orbit of $\phi$. 
Its cardinality is given by $\lvert \mathcal{O} \rvert = \sum_{k = 1}^{n}a_k \lvert S_k \rvert \in \R$, where $\lv S_k \rv$ is the cardinality of an orbit. We say $\mathcal{O}$ is \textit{integral} if each $a_k \in \Z_{> 0}$. Denote the collection of orbit sets of $\phi$  by $\mathcal{P}_{\R}(\phi)$, and its integral subset by $\mathcal{P}_{\Z}(\phi)$. 

\subsubsection{Equidistribution of orbit sets}
Fix $\phi \in \Diff(\Sigma_{P},\omega)$. Let \( C_{0}^{\infty}(\Sigma_{P}) \) be the space of smooth, compactly supported functions on $\Sigma_{P}$. For $\O = \sum_{k=1}^{n}a_kS_k \in \mathcal{P}_{\R}(\phi)$, and $f \in C_{0}^{\infty}(\Sigma_{P})$, we define an action
\begin{equation*}
    \mathcal{O}(f) = \sum_{k=1}^{n}a_k S_k(f)
\end{equation*}
where $S(f) = \sum_{i=1}^{d}f(x_i)$ is the sum of $f$ evaluated over the points in $S$. A sequence of orbit sets $\{\O_N\}_{N \geq 1}$ in $\P_{\R}(\phi)$ is called \textit{equidistributed} if for all $f \in C_{0}^{\infty}(\Sigma_{P})$, 
\begin{equation}
\label{equidistribution}
\lim_{N \to \infty} \frac{\O_N(f)}{\lvert \O_N \rvert} = \int_{\Sigma}f\omega
\end{equation}
Note that if $\phi$ admits an equidistributed sequence of orbit sets, then it has a dense set of periodic points in $\Sigma_{P}$.

\subsection{Hamiltonians}

Given a Hamiltonian $H \in C^{\infty}(\R/\Z \times \Sigma)$, its \textit{Hamiltonian vector field} $X_H$ is given by
\begin{equation*}
\omega((X_H)_t, \cdot) = dH_t.
\end{equation*}
We denote by $\varphi_{H}^t$ the time-$t$ flow of $X_H$.  For two Hamiltonians \( H,K \in C^{\infty}(\R/\Z \times \Sigma) \), define their \textit{composition} as
\begin{equation*}
    (H \# K)(t,x) = H(t,x) + K(t,(\varphi_{H}^{t})^{-1}(x))
\end{equation*}
then \( \varphi_{H \# K}^{t} = \varphi_{H}^{t} \circ \varphi_{K}^{t} \) for all $t \in [0,1]$. We say two maps \(\phi, \phi' \in \Diff(\Sigma, \omega)\) are \textit{Hamiltonian isotopic} (or belong to the same Hamiltonian isotopy class) if \(\phi' = \phi \circ \varphi_H^1\) for some \(H \in C^{\infty}(\mathbb{R}/\mathbb{Z} \times \Sigma)\). In this situation, we denote \(\phi' = \phi^H\). 

We consider the \(C^{\infty}\)-topology on the space of Hamiltonians, which is induced by a complete metric \(d_{C^{\infty}}\) given as follows. Choose an auxiliary Riemannian metric \(g\) on \(\mathbb{R}/\mathbb{Z} \times \Sigma\). For \(H \in C^{\infty}(\mathbb{R}/\mathbb{Z} \times \Sigma)\), let \(\| H \|_{C^l} = \sum_{k=0}^{l} \sup |\nabla^{k} H|_{g}\) be the \(C^l\)-norm of \(H\). Then \(d_{C^{\infty}}(H, K) = \sum_{l=0}^{\infty} 2^{-l} \frac{\| H - K \|_{C^l}}{1 + \| H - K \|_{C^l}}\). We say that \(H \in C^{\infty}(\mathbb{R}/\mathbb{Z} \times \Sigma)\) is \(C^{\infty}\)-small if \(d_{C^{\infty}}(H, 0)\) is small.

\subsubsection{Hamiltonian flows preserving the punctures}

We consider some special Hamiltonians on \(\Sigma\) whose flows fix \(P\) pointwise for all time. Let 
\begin{equation*}
C^{\infty}(\R/\Z \times \Sigma ;P) = \{H \in C^{\infty}(\R/\Z \times \Sigma) \mid dH_{(t,p)} = 0 \text{ for all $t \in \R/\Z$ and $p \in P$}\}
\end{equation*}
Then $C^{\infty}(\R/\Z \times \Sigma ;P)$ is $C^{\infty}$-closed in $C^{\infty}(\R/\Z \times \Sigma)$. For each positive integer \(N \geq 1\), let \(C^{\infty}([0,1]^N \times \R/\Z \times \Sigma ;P)\) be the collection of ``multi-parameter'' Hamiltonians \(H \in C^{\infty}([0,1]^N \times \R/\Z \times \Sigma)\) such that \(H^{\tau} \in C^{\infty}(\R/\Z \times \Sigma ;P)\) for each \(\tau \in [0,1]^N\). Here, \(H^{\tau}\) is the restriction of \(H\) to \(\{\tau\} \times \R/\Z \times \Sigma\).

\subsection{The $C^0$-topology}

Let \(\Homeo(\Sigma)\) be the group of homeomorphisms of \(\Sigma\). For two elements \(\phi, \psi \in \Homeo(\Sigma)\), define the \textit{\(C^0\)-distance} between them by
\begin{equation*}
   d_{C^0}(\phi, \psi) = \max_{x \in \Sigma} d(\phi(x), \psi(x))
\end{equation*}
where \(d\) is a Riemannian distance on \(\Sigma\). Also, let
\begin{equation*}
   \overline{d}(\phi, \psi) = \max\{d_{C^0}(\phi, \psi), d_{C^0}(\phi^{-1}, \psi^{-1})\}.
\end{equation*}
Then \(\overline{d}\) defines a complete metric on \(\Homeo(\Sigma)\). The topology induced by \(\overline{d}\), called the \textit{\(C^0\)-topology}, coincides with the compact-open topology and is independent of the choice of \(d\). If \(\{\phi_n\}_{n \geq 1}\) is a Cauchy sequence that converges to \(\Phi\) in the \(C^0\)-topology, we write \(\Phi = \lim_{C^0}\phi_n\). 

\subsubsection{The mass flow homomorphism}
Let \(\Homeo(\Sigma, \omega)\) be the group of area-preserving homeomorphisms of \((\Sigma, \omega)\). It is a \(C^0\)-closed subgroup of \(\Homeo(\Sigma)\). Let \(\Homeo_0(\Sigma, \omega)\) be the identity component of \(\Homeo(\Sigma, \omega)\). In \cite{Fathi}, Fathi constructs the \textit{mass flow homomorphism}
\begin{equation}
\label{mass flow}
\theta: \Homeo_0(\Sigma, \omega) \to H_1(\Sigma; \mathbb{R}) / \Gamma_\omega
\end{equation}
where \(\Gamma_\omega = H_1(\Sigma; \mathbb{Z})\) when \(\Sigma = \mathbb{T}^2\) and \(\Gamma_\omega = 0\) otherwise. Fathi shows that \(\theta\) is surjective and \(C^0\)-continuous. Let \(\Ham(\Sigma, \omega)\) be the group of Hamiltonian diffeomorphisms of \((\Sigma, \omega)\). Then \(\ker(\theta) = \overline{\Ham(\Sigma, \omega)}\), the \(C^0\)-closure of \(\Ham(\Sigma, \omega)\) within \(\Homeo_0(\Sigma, \omega)\). The restriction of \(\theta\) to \(\Diff_0(\Sigma, \omega)\) is Poincaré dual to the flux homomorphism.

\subsection{Periodic Floer homology}
\subsubsection{The mapping torus}
Given \(\phi \in \Diff(\Sigma, \omega)\), define its mapping torus as
\begin{equation*}
\label{mapping torus}
    M_{\phi} = [0,1] \times \Sigma / (0, \phi(p)) \sim (1, p)
\end{equation*}
It admits a vector field \(R = \partial_t\) (the \textit{Reeb vector field}) and a two-form \(\omega_{\phi}\), which is the pullback of the area form \(\omega\). By construction, the periodic orbits of \(\phi\) correspond one-to-one to the orbits of \(R\) (the \textit{Reeb orbits}).

\subsubsection{Rational maps}
\label{rational maps}

Let \(\phi \in \Diff(\Sigma,\omega)\). We say that \(\phi\) is \textit{rational} if \([\omega_{\phi}] \in H^2(M_{\phi};\R)\) is a real multiple of a class in \(H^2(M_{\phi};\Z)\). Since we assume \(\int_{\Sigma} \omega = 1\), this is equivalent to saying that \([\omega_{\phi}] \in H^2(M_{\phi};\Q)\). For a Hamiltonian \(H \in C^{\infty}(\R/\Z \times \Sigma)\), the map
\begin{align}
\label{mapping tori diffeo}
    \Psi_H: [0,1] \times \Sigma &\to [0,1] \times \Sigma \\
    (t,x) &\mapsto (t,(\varphi_{H}^t)^{-1}(x)) \nonumber
\end{align}
descends to a diffeomorphism \(M_{\phi} \to M_{\phi^H}\). Note that \(\Psi_{H}^{*}[\omega_{\phi^H}] = [\omega_{\phi}]\), so \(\phi\) is rational if and only if \(\phi^H\) is rational. By virtue of this, we say that a Hamiltonian isotopy class in \(\Diff(\Sigma, \omega)\) is \textit{rational} if any map in that class is rational.

\subsubsection{Generators, the chain complex, and formal properties of PFH}
We introduce a version of Periodic Floer Homology called twisted PFH. Fix a rational, non-degenerate $\phi \in \Diff(\Sigma,\omega)$. Fix $\Gamma \in H_1(M_{\phi};\Z)$ such that for some constant $\rho \in \R$, 
\begin{equation}
\label{rationality}
    c_1(E) + 2PD(\Gamma) = -\rho [\omega_{\phi}]
\end{equation}
Here, $E$ is the vertical tangent bundle of $M_{\phi}$ (transverse to the Reeb vector field), and $PD(\Gamma)$ is the Poincar\'e dual of $\Gamma$. Let \(d(\Gamma) = \Gamma \cdot [\Sigma]\) be the \textit{degree} of the class \(\Gamma\). Since \(\phi\) is rational, there exists \(\Gamma \in H_1(M_{\phi};\Z)\) satisfying (\ref{rationality}) with arbitrarily large degree.

Assume that \(d(\Gamma) > \max(1,g)\), where \(g\) is the genus of \(\Sigma\). An (integral) orbit set of \(R\) is a formal sum \(\alpha = \sum_{i} m_i \alpha_i\), where \(m_i \in \Z_{>0}\) and the \(\alpha_i\)'s are distinct, embedded Reeb orbits. We say $\alpha$ is a \textit{PFH generator in the class $\Gamma$} if
\begin{itemize}
    \item \(m_i = 1\) whenever \(\alpha_i\) is hyperbolic.
    \item \([\alpha] = \sum_{i} m_i [\alpha_i] = \Gamma\).
\end{itemize}
Fix a 1-cycle $\gamma \subset M_{\phi}$ with $[\gamma] = \Gamma$. We call such a \(\gamma\) a \textit{reference cycle}, with degree \(d(\gamma) = d(\Gamma)\). The twisted PFH chain complex \(\text{TwPFC}(\phi,\gamma)\) a $\Z_2$-vector space over the pairs $(\alpha, Z)$, where $\alpha$ is a PFH generator in the class $\Gamma$, and $Z$ is a relative class in $H_2(M,\alpha, \gamma;\Z)$. That is, $Z$ is an equivalence class of two chains $W$ with $\partial W = \alpha- \gamma$, modulo the boundary of 3-chains.

Upon choosing an admissible almost complex structure \(J\) on \(\R \times M_{\phi}\), the twisted PFH differential counts \(J\)-holomorphic curves in \(\R \times M_{\phi}\) with ``ECH index'' 1. The resulting homology is independent of the choice of \(J\) and is denoted by \(\Tw(\phi,\gamma)\). $\Tw(\phi,\gamma)$ admits a relative $\Z$-grading, which can be made absolute once we choose a trivialization of $E$ over $\gamma$. If \(d(\gamma)\) is sufficiently large, then \(\Tw(\phi,\gamma) \neq 0\) \cite[Theorem 1.4]{CGPZ}. Twisted PFH is also invariant under Hamiltonian isotopies: Given \(H \in C^{\infty}(\R/\Z \times \Sigma)\), there is a canonical isomorphism between \(\Tw(\phi,\gamma)\) and \(\Tw(\phi^H,\gamma^H)\), where \(\gamma^H = \Psi_H(\gamma)\) and \(\Psi_H: M_{\phi} \to M_{\phi^H}\) is the diffeomorphism defined in (\ref{mapping tori diffeo}). This allows one to define the PFH of a degenerate map \(\phi\) as the PFH of its non-degenerate Hamiltonian perturbation.

\subsubsection{The $U$ map}
There is a map $U: \Tw_{*}(\phi,\gamma) \to \Tw_{*-2}(\phi,\gamma)$ which, on the chain level, counts $J$-holomorphic curves of ECH index 2 that pass through a base point $y$ in $\R \times M_{\phi}$. The $U$ map does not depend on the choice of $y$ or the almost complex structure $J$. It is also invariant under Hamiltonian isotopies, in that it commutes with the canonical isomorphism between \(\Tw(\phi,\gamma)\) and \(\Tw(\phi^H,\gamma^H)\) \cite[Proposition 3.1]{EH}. For more properties of PFH as a $U$-module, we refer readers to \cite{CGPPZ}.

\subsubsection{The spectral invariants \label{spectral invariants}}
We define an action on a generator $(\alpha, Z)$ of $\text{TwPFC}(\phi,\gamma)$ by 
\begin{equation*}
\mathcal{A}(\alpha,Z) = \int_{Z}\omega_{\phi}
\end{equation*}
Let \( \text{TwPFC}^{L}(\phi, \gamma) \) be the submodule generated by the pairs $(\alpha,Z)$ for which $\mathcal{A}(\alpha,Z) \leq L$. Given a choice of admissible almost complex structure \(J\) on \(\R \times M_{\phi}\), \(\text{TwPFC}^{L}(\phi, \gamma)\) is a subcomplex of \(\text{TwPFC}(\phi, \gamma)\). Let $\text{TwPFH}^{L}_{*}(\phi,\gamma)$ denote the resulting homology and
\begin{equation*}
    \iota_{L}: \text{TwPFH}^{L}_{*}(\phi, \gamma) \to \text{TwPFH}_{*}(\phi, \gamma)
\end{equation*}
be the map induced from the inclusion of chain complexes. For a non-zero homology class $\sigma \in \text{TwPFH}_{*}(\phi,\gamma)$, we define the \textit{spectral invariant} $c_{\sigma}(\phi,\gamma)$ to be
\begin{equation*}
c_{\sigma}(\phi,\gamma) = \inf\{L \mid \sigma \in \text{Im}(\iota_{L})\}
\end{equation*}
We state two important properties of PFH spectral invariants. The first property allows us to extend PFH spectral invariants to the degenerate case (see \cite[Section 2.2.3]{CGPZ}). The extended PFH spectral invariants will satisfy the same properties. 

\begin{prop}{(Hofer-Lipschitz continuity, \cite[Proposition 5.1]{CGPZ})}
Fix a reference cycle \(\gamma\) in \(M_{\phi}\) with degree \(d\) and a non-zero class \(\sigma \in \Tw(\phi,\gamma)\). Let \(H, K \in C^{\infty}(\R/\Z \times \Sigma)\) be Hamiltonians such that both \(\phi^{H}\) and \(\phi^{K}\) are non-degenerate. Then, we have
\begin{equation}
\label{hofer-lipschitz}
d\int_{0}^{1}\min (H_t - K_t)\,dt \leq c_{\sigma}(\phi^H,\gamma^{H}) - c_{\sigma}(\phi^K,\gamma^{K}) + \int_{\gamma} (H - K)\,dt \leq d\int_{0}^{1}\max (H_t - K_t)\,dt
\end{equation}
where \(\sigma\) is viewed as a class in \(\Tw(\phi^{H},\gamma^{H})\) or \(\Tw(\phi^{K},\gamma^{K})\) under the canonical isomorphism with \(\Tw(\phi,\gamma)\).
\end{prop}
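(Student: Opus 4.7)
My plan is to use the fact that the canonical isomorphism $\Tw(\phi^K, \gamma^K) \cong \Tw(\phi^H, \gamma^H)$ can be realized at the chain level by the diffeomorphism $\Psi := \Psi_H \circ \Psi_K^{-1}: M_{\phi^K} \to M_{\phi^H}$ of mapping tori, and to directly track how the action filtration transforms under this identification. By the symmetry of the claim in $(H, K)$, it suffices to establish the upper bound; the lower bound then follows by swapping the roles of $H$ and $K$.

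First, I would note that by Hamiltonian invariance of the cohomology class of $\omega_{\phi}$ under $\Psi_H$ (stated following (\ref{mapping tori diffeo})), one has $\Psi^*[\omega_{\phi^H}] = [\omega_{\phi^K}]$ in $H^2(M_{\phi^K};\R)$, so there exists a 1-form $\lambda$ with
\begin{equation*}
    \Psi^*\omega_{\phi^H} = \omega_{\phi^K} + d\lambda.
\end{equation*}
An explicit primitive $\lambda$ can be constructed in the $[0,1]\times \Sigma$ cover from $H$, $K$, and their Hamiltonian flows. For a PFH generator pair $(\alpha, Z)$ of $\text{TwPFC}(\phi^K, \gamma^K)$ with $\partial Z = \alpha - \gamma^K$, the pushforward $(\Psi(\alpha), \Psi_*Z)$ is a generator of $\text{TwPFC}(\phi^H, \gamma^H)$, and Stokes' theorem gives
\begin{equation*}
    \mathcal{A}(\Psi(\alpha), \Psi_*Z) - \mathcal{A}(\alpha, Z) = \int_Z (\Psi^*\omega_{\phi^H} - \omega_{\phi^K}) = \int_\alpha \lambda - \int_{\gamma^K}\lambda.
\end{equation*}

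The next step is to evaluate and bound these two integrals in terms of $H-K$. With a judicious choice of primitive, the integral $\int_{\gamma^K}\lambda$ pulls back to $-\int_\gamma (H-K)\,dt$ on the reference cycle $\gamma$ in $M_\phi$, producing the correction term on the left-hand side of (\ref{hofer-lipschitz}). For $\int_\alpha \lambda$, the key observation is that $\alpha$ is a Reeb cycle in $M_{\phi^K}$ of total degree $d(\alpha) = d$, i.e., it wraps the $[0,1]$-factor exactly $d$ times. After canceling the horizontal components (which contribute nothing along Reeb flow lines), a pointwise $\max$ estimate on the $dt$-component of $\lambda$ yields
\begin{equation*}
    \int_\alpha \lambda \;\leq\; d\int_0^1 \max\bigl(H_t - K_t\bigr)\,dt.
\end{equation*}
Applying this to a chain $\xi$ representing $\sigma$ whose action is arbitrarily close to $c_{\sigma}(\phi^K, \gamma^K)$ and taking the infimum yields the desired upper bound on $c_{\sigma}(\phi^H, \gamma^H)$.

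The main obstacle I expect is the explicit construction of the primitive $\lambda$ and the verification of the two integral identities with correct signs. This requires careful bookkeeping with the non-commuting flows $\varphi_H^t$ and $\varphi_K^t$ and invoking the composition formula $H \# K$ in the right order. A secondary subtlety is confirming that the isomorphism on twisted PFH induced by $\Psi$ agrees with the canonical Hamiltonian-isotopy isomorphism at the level of chains, which is standard but requires checking that $\Psi$ correctly transports the relative homology class data $Z$.
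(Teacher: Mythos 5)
This proposition is not proved in the paper at all: it is quoted verbatim from \cite[Proposition 5.1]{CGPZ}, so the only thing the paper supplies is the citation. Your attempt is therefore an attempt at an original proof, and it contains a genuine gap at its very first step. The canonical isomorphism $\Tw(\phi^K,\gamma^K)\cong\Tw(\phi^H,\gamma^H)$ is \emph{not} realized at chain level by pushing generators forward under $\Psi=\Psi_H\circ\Psi_K^{-1}$. The diffeomorphism $\Psi_H$ identifies the mapping tori as smooth manifolds and transports the reference cycle, but it does not intertwine the Reeb vector fields: pushing $\partial_t$ forward by $\Psi_H$ produces $\partial_t$ plus a horizontal term coming from the Hamiltonian flow, so closed Reeb orbits of $M_{\phi^K}$ (i.e.\ periodic orbits of $\phi^K$) are not sent to closed Reeb orbits of $M_{\phi^H}$ (periodic orbits of $\phi^H$). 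In general $\phi^H$ and $\phi^K$ have entirely different periodic point sets, so there is no bijection of PFH generators at all, and the object $(\Psi(\alpha),\Psi_*Z)$ is simply not a generator of $\mathrm{TwPFC}(\phi^H,\gamma^H)$. Consequently the Stokes-theorem comparison of actions, which is the engine of your argument, has nothing to act on.

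The actual mechanism behind (\ref{hofer-lipschitz}) is analytic rather than diffeomorphism-theoretic: the Hamiltonian isotopy invariance isomorphism is constructed from continuation/cobordism maps (defined via holomorphic curves in an exact symplectic cobordism between the two mapping tori, or via the Seiberg--Witten counterpart), and the Lipschitz bound comes from energy positivity for those curves, which controls how much the cobordism map can increase the action filtration; the shift is exactly of size $d\int_0^1\max(H_t-K_t)\,dt$, with the $\int_\gamma(H-K)\,dt$ term accounting for the change of reference cycle. Your ``secondary subtlety'' — checking that the $\Psi$-induced map agrees with the canonical isomorphism — is thus not a bookkeeping issue but the point where the approach breaks down: there is no $\Psi$-induced chain map to compare. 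If you want a proof independent of \cite{CGPZ}, you would need to work with the filtered cobordism maps directly (as in \cite{EH}), not with a pushforward by $\Psi$. Where a pure relabeling argument of the kind you propose does work is only in degenerate situations, e.g.\ when $H-K$ is a function of $t$ alone, so that $\varphi_H^t$ and $\varphi_K^t$ generate the same isotopy and the action change is an honest cohomological shift; that special case recovers the equality version of (\ref{hofer-lipschitz}) but does not give the general inequality.
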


\begin{prop}{(The PFH Weyl law, \cite[Theorem 1.5]{CGPZ})}
Let \(\{\gamma_m\}_{m \geq 1}\) be a sequence of reference cycles in \(M_{\phi}\) with increasing degrees \(d_m\) tending to \(\infty\), and \(\{\sigma_m\}_{m \geq 1}\) a sequence of non-zero classes in \(\Tw(\phi,\gamma_m)\). For \(H \in C^{\infty}(\R/\Z \times \Sigma)\), we have
\begin{equation}
\label{CGPZ weyl law}
   \lim_{m \to \infty} \frac{c_{\sigma_m}(\phi^{H},\gamma_m^{H}) - c_{\sigma_m}(\phi,\gamma_m) + \int_{\gamma_m}H \, dt}{d_m} = \int_{[0,1] \times \Sigma}H \,\omega \wedge dt
\end{equation}
\end{prop}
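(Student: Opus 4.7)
The plan is to bootstrap the Hofer--Lipschitz estimate (\ref{hofer-lipschitz}) into the sharp asymptotic (\ref{CGPZ weyl law}). Specializing (\ref{hofer-lipschitz}) to $K = 0$ yields the sandwich
$$d_m \int_0^1 \min_{\Sigma} H_t\, dt \;\leq\; c_{\sigma_m}(\phi^H,\gamma_m^H) - c_{\sigma_m}(\phi,\gamma_m) + \int_{\gamma_m}H\,dt \;\leq\; d_m \int_0^1 \max_{\Sigma} H_t\,dt,$$
so after dividing by $d_m$ the quantity in (\ref{CGPZ weyl law}) is trapped inside the $C^0$-envelope of $H$. The theorem asserts that this envelope collapses onto the spatial average as $d_m \to \infty$; in other words, the $C^0$-sandwich must be upgraded to the exact $L^1$-value.

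I would first reduce to localized estimates by telescoping. Partition $\Sigma$ into small tiles $B_1, \ldots, B_N$ and decompose $H = H_1 \# \cdots \# H_N$ so that each $H_j$ has spatial support in $B_j$ and disjoint time support $[(j-1)/N, j/N] \subset \R/\Z$. A stage-by-stage application of (\ref{hofer-lipschitz}) through this composition, combined with the canonical Hamiltonian-invariance isomorphism $\Tw(\phi^K, \gamma_m^K) \cong \Tw(\phi, \gamma_m)$, reduces the Weyl law to establishing a sharpened single-tile estimate: for each localized perturbation $H_j$, the spectral shift along the $j$-th stage should asymptotically equal $d_m \int H_j\,\omega \wedge dt$ minus the reference-cycle term $\int_{\gamma_m}H_j\,dt$, with an error of $o(d_m/N)$ uniform in $m$. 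Time-disjointness of the supports guarantees that summing in $j$ recovers both $\int_{\gamma_m}H\,dt$ and $\int H\,\omega \wedge dt$ on the respective sides.

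The sharpened single-tile estimate is the technical heart of the proof. Heuristically, a PFH generator realizing $c_{\sigma_m}$ is a weighted collection of periodic orbits of total degree $d_m$; as $d_m \to \infty$, its orbit points ought to equidistribute with respect to $\omega_{\phi} \wedge dt$ in the mapping torus. Granting this equidistribution, the effect of the perturbation $H_j$, supported in a tile of $\omega \wedge dt$-mass $a_j$, scales like $d_m \cdot a_j \cdot (\text{spatial average of } H_j)$, which is exactly $d_m \int H_j\,\omega \wedge dt$.

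The main obstacle is to establish this equidistribution rigorously. It does not follow from the formal properties of spectral invariants and requires genuine input from the holomorphic-curve / gauge-theoretic underpinnings of PFH. In \cite{CGPZ} this is accomplished by identifying twisted PFH with a version of monopole Floer homology and importing the corresponding Seiberg--Witten spectral asymptotics, adapting the ECH Weyl law framework of Cristofaro-Gardiner--Hutchings--Ramos to the twisted PFH setting. Any alternative route to (\ref{CGPZ weyl law}) would require a quantitative geometric input of comparable strength.
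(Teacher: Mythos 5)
This proposition is quoted by the paper verbatim from \cite[Theorem 1.5]{CGPZ} and no proof is given here, so there is no internal argument to compare against; I can only evaluate the sketch on its own terms. Your opening observation (Hofer--Lipschitz traps the normalized spectral shift within the $C^0$-envelope of $H$) and your closing paragraph (the geometric content comes from the Lee--Taubes identification of twisted PFH with monopole Floer homology, adapting the ECH Weyl law framework) are both accurate. The middle of the sketch, however, does not hold up.

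The tile decomposition $H = H_1 \# \cdots \# H_N$ produces no reduction: your ``sharpened single-tile estimate'' is precisely the Weyl law again, now for a compactly supported $H_j$, and stage-by-stage Hofer--Lipschitz still only controls each spectral increment to within the $C^0$-envelope of $H_j$, so the localized problem is exactly as hard as the global one. More seriously, the equidistribution heuristic you then invoke to close the single-tile estimate runs the logic backwards. Equidistribution of periodic orbits---Irie's theorem \cite{Irie} in the ECH setting, Prasad's \cite{Prasad} in the PFH setting---is \emph{derived from} the Weyl law, not used to prove it; neither \cite{CGPZ} nor the ECH volume-property arguments it adapts establish equidistribution of the orbit points of spectral representatives as an intermediate step. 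The asymptotics are instead extracted from energy estimates for solutions of the perturbed Seiberg--Witten equations after passing through the Taubes and Lee--Taubes isomorphisms. Filled out literally, the sketch would be circular; only your final sentence, conceding that quantitative gauge-theoretic input is unavoidable, is on firm ground.
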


\subsubsection{Comparison with other Floer homologies}
Let \(\underline{L}\) be a link on \(\Sigma\). In \cite{Chen}, Chen constructed a Floer homology \(HF(\Sigma, \underline{L})\), which has associated numerical invariants called \textit{HF spectral invariants}. Formally, these are maps
\begin{equation}
\label{HF spectral invariants}
    c_{\underline{L}}: C^{\infty}(\R/\Z \times \Sigma) \times HF(\Sigma, \underline{L}) \to \{-\infty\} \cup \mathbb{R}.
\end{equation}
Chen also constructed an \textit{open-closed morphism} between HF and the twisted PFH, which allows for the comparison of PFH spectral invariants with HF spectral invariants. We will use these results in Section 4. 
\end{section}

\section{Proof of Theorem \ref{main thm1}}
Let $\Phi \in \Homeo(\Sigma,\omega)$ be an area-preserving homeomorphism. According to \cite[Theorem I]{Oh}, $\Phi$ is the $C^0$-limit of a sequence $\{\phi_n\}_{n \geq 1}$ in $\Diff(\Sigma,\omega)$. We now prove a refinement of this result. 
\begin{lem}
\label{fine approximation}
Let \(\Phi \in \Homeo(\Sigma, \omega)\). Then \(\Phi = \lim_{C^0} \phi_n\) for some sequence \(\{\phi_n\}_{n \geq 1}\) in \(\Diff(\Sigma, \omega)\) which are all Hamiltonian isotopic. In other words, \(\Phi\) lies in the \(C^0\)-closure of a Hamiltonian isotopy class in $\Diff(\Sigma,\omega)$. In addition, this Hamiltonian isotopy class is unique.
\end{lem}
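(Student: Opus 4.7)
The plan is to begin from the approximation given by Oh's theorem and then correct each term by a $C^0$-small area-preserving factor so that all of them share a common value under the mass flow homomorphism $\theta$; since $\theta|_{\Diff_0(\Sigma,\omega)}$ has kernel exactly $\Ham(\Sigma,\omega)$ (by the Poincar\'e-duality identification with the flux recorded after \eqref{mass flow}), this places the resulting sequence in a single Hamiltonian isotopy class.

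By \cite{Oh}, pick $\psi_n \in \Diff(\Sigma,\omega)$ with $\psi_n \to \Phi$ in $C^0$. Since two sufficiently $C^0$-close diffeomorphisms of a closed surface are isotopic, and since Moser's trick promotes such an isotopy to an area-preserving one, the $\psi_n$ eventually lie in a common connected component of $\Diff(\Sigma,\omega)$. Right-multiplying both $\Phi$ and the tail of the sequence by the inverse of a single $\psi_{N_0}$---an operation that sends any Hamiltonian isotopy class to another, as $\phi\chi$ and $\phi'\chi$ are Hamiltonian isotopic iff $\phi$ and $\phi'$ are---reduces the problem to the case $\psi_n \in \Diff_0(\Sigma,\omega)$ and $\Phi \in \overline{\Diff_0(\Sigma,\omega)}^{C^0} \subset \Homeo_0(\Sigma,\omega)$, so that $\theta(\Phi)$ is defined.

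Next I build a smooth local section of $\theta|_{\Diff_0(\Sigma,\omega)}$ near the origin. Choose closed $1$-forms $\alpha_1,\dots,\alpha_b$ representing a basis of $H^1(\Sigma;\R)$, and define $X_i$ by $\iota_{X_i}\omega = \alpha_i$. Each flow $\varphi_{X_i}^t$ is area-preserving, and $\theta(\varphi_{X_i}^t) = t\,PD(\alpha_i)$ under Poincar\'e duality between flux and mass flow, so the assignment
\[
s\!\left(\sum_{i=1}^{b} t_i \, PD(\alpha_i)\right) \defeq \varphi_{X_1}^{t_1} \circ \cdots \circ \varphi_{X_b}^{t_b}
\]
is a smooth local section on a neighborhood $U$ of $0 \in H_1(\Sigma;\R)/\Gamma_\omega$, with $s(0)=\mathrm{id}$ and $s(v)\to\mathrm{id}$ in $C^\infty$ as $v\to 0$. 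By $C^0$-continuity of $\theta$, one has $\theta(\psi_n)\to\theta(\Phi)$; choose $N$ large enough that $v_n \defeq \theta(\psi_N) - \theta(\psi_n)$ lies in $U$ for every $n \geq N$, and set $h_n \defeq s(v_n)$, $\phi_n \defeq \psi_n \circ h_n$. Then $\theta(\phi_n) = \theta(\psi_n) + v_n = \theta(\psi_N)$, so all the $\phi_n \in \Diff_0(\Sigma,\omega)$ share a common $\theta$-value and thus lie in one Hamiltonian isotopy class, while $h_n \to \mathrm{id}$ in $C^0$ forces $\phi_n \to \Phi$ in $C^0$. Uniqueness is a direct consequence of the same continuity: if two sequences $\{\phi_n\},\{\phi'_n\}$ drawn from distinct Hamiltonian isotopy classes both $C^0$-converged to $\Phi$, then $\phi_n(\phi'_n)^{-1}\to\mathrm{id}$ in $C^0$ while $\theta(\phi_n(\phi'_n)^{-1}) = \theta(\phi_n)-\theta(\phi'_n)$ would be a nonzero constant, a contradiction.

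The main obstacle is the construction of $s$ itself: simultaneously ensuring that its image is area-preserving and that $s(v)\to\mathrm{id}$ in $C^0$ as $v\to 0$. The recipe from symplectic vector fields attached to a basis of (harmonic) $1$-forms achieves both at once, but some care is warranted in the degenerate regimes, most notably $\Sigma = \T^2$ (where $\Gamma_\omega = H_1(\T^2;\Z) \neq 0$ forces $s$ onto a fundamental domain) and $\Sigma = S^2$ (where $H^1(\Sigma;\R)=0$ trivialises the correction, since $\Diff_0(S^2,\omega)=\Ham(S^2,\omega)$, and the claim reduces directly to Oh's theorem).
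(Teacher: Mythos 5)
Your approach is in the same circle of ideas as the paper's---both rest on the mass flow homomorphism $\theta$, its $C^0$-continuity, and the Poincar\'e-duality identification of $\theta|_{\Diff_0(\Sigma,\omega)}$ with flux---but you take a more explicit route: you correct each approximant $\psi_n$ separately by a local section of $\theta$, whereas the paper applies a single correction, choosing one $\psi \in \Diff_0(\Sigma,\omega)$ with $\theta(\psi) = \theta(\varphi^{-1}\circ\Phi)$, and then invokes $\ker(\theta) = \overline{\Ham(\Sigma,\omega)}$ to produce the Hamiltonian sequence. Your variant has the minor advantage of not needing $\ker\theta$ described as a $C^0$-closure: once all the corrected maps lie in $\Diff_0(\Sigma,\omega)$ with a common $\theta$-value, the flux criterion gives Hamiltonian isotopy outright.

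There is, however, a genuine error in the convergence step. You set $v_n = \theta(\psi_N) - \theta(\psi_n)$ for one fixed large $N$. As $n \to \infty$, $C^0$-continuity of $\theta$ gives $\theta(\psi_n) \to \theta(\Phi)$, so $v_n \to \theta(\psi_N) - \theta(\Phi)$, which is small (for $N$ large) but generically nonzero. Consequently $h_n = s(v_n) \to s(\theta(\psi_N)-\theta(\Phi)) \neq \mathrm{id}$, and the corrected sequence satisfies $\phi_n = \psi_n \circ h_n \to \Phi \circ s(\theta(\psi_N)-\theta(\Phi))$, not $\Phi$. The assertion ``$h_n\to\mathrm{id}$ in $C^0$'' is therefore false as written, and the sequence you build converges to the wrong limit. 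The remedy is to aim at $\theta(\Phi)$ directly: set $v_n := \theta(\Phi) - \theta(\psi_n)$. Then $v_n \to 0$, so $h_n \to \mathrm{id}$ and $\phi_n \to \Phi$, and $\theta(\phi_n) = \theta(\Phi)$ is constant for all large $n$, which is exactly what places the $\phi_n$ in a single Hamiltonian isotopy class. It would also be worth making explicit in the uniqueness argument that, after the same reduction, $\phi_n(\phi_n')^{-1}$ eventually lies in $\Homeo_0(\Sigma,\omega)$ so that $\theta$ can be applied; the paper's version handles this via a short computation using that $\theta$ is a homomorphism vanishing on conjugates of Hamiltonian maps.
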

\begin{proof}
Let \(\P(\Phi)\) be the path component of \(\Phi\) in \(\Homeo(\Sigma, \omega)\). Since \(\Homeo(\Sigma, \omega)\) is locally contractible, \(\P(\Phi)\) is open. As \(\Diff(\Sigma, \omega)\) is \(C^0\)-dense in \(\Homeo(\Sigma, \omega)\), there exists \(\varphi \in \Diff(\Sigma, \omega) \cap \P(\Phi)\). Thus, \(\Phi\) and \(\varphi\) are isotopic as area-preserving homeomorphisms, and \(\varphi^{-1} \circ \Phi \in \Homeo_{0}(\Sigma, \omega)\). 

Let \(\theta\) be the mass-flow homomorphism in (\ref{mass flow}). Choose \(\psi \in \Diff_{0}(\Sigma, \omega)\) such that \(\theta(\psi) = \theta(\varphi^{-1} \circ \Phi)\), then \(\theta(\psi^{-1} \circ \varphi^{-1} \circ \Phi) = 0\). Since \(\ker(\theta) = \overline{\Ham(\Sigma, \omega)}\), it follows that \(\psi^{-1} \circ \varphi^{-1} \circ \Phi = \lim_{C^0} \varphi_{H_n}^{1}\) for some \(\{H_n\}_{n \geq 1}\) in \(C^{\infty}(\mathbb{R}/\mathbb{Z} \times \Sigma)\). Thus, \(\Phi = \lim_{C^0} \phi_n\) where \(\phi_n = (\varphi \circ \psi)^{H_n}\).

To prove uniqueness, assume that \(\Phi = \lim_{C^0}\varphi_{1}^{H_n} = \lim_{C^0}\varphi_{2}^{K_n}\) where \(\varphi_1, \varphi_2 \in \Diff(\Sigma, \omega)\). Then \(\text{Id}_{\Sigma} = \lim_{C^0} \varphi_{1}^{H_n} \circ (\varphi_{2}^{K_n})^{-1}\). For each fixed \(n\), we have
\begin{equation*}
\theta[\varphi_{1}^{H_n} \circ (\varphi_{2}^{K_n})^{-1}] = \theta(\varphi_1 \circ \varphi_{H_n \# \overline{K_n}}^{1} \circ \varphi_2^{-1}) = \theta(\varphi_1 \circ \varphi_{2}^{-1})
\end{equation*}
where \(\overline{K_n}(t, x) = -K_n(t, (\varphi_{K_n}^t)(x))\). The \(C^0\)-continuity of $\phi$ implies that \(\theta(\varphi_1 \circ \varphi_2^{-1}) = 0\). Thus, \(\varphi_1\) and \(\varphi_2\) are Hamiltonian isotopic.
\end{proof}

\begin{rem}
For $\phi \in \Diff(\Sigma,\omega)$, denote its Hamiltonian isotopy class as the coset $\phi\Ham(\Sigma, \omega)$. Lemma \ref{fine approximation} implies that $\Homeo(\Sigma,\omega) = \bigsqcup_{\phi \in \Diff(\Sigma,\omega)} \overline{\phi \Ham(\Sigma,\omega)}$, where $\overline{\phi \Ham(\Sigma,\omega)}$ is the $C^0$-closure of $\phi\Ham(\Sigma,\omega)$ within $\Homeo(\Sigma,\omega)$. 
\end{rem}
By virtue of Lemma 3.1, we say an area-preserving homeomorphism \(\Phi \in \Homeo(\Sigma, \omega)\) is \textit{rational} if it lies in the \(C^0\)-closure of a rational Hamiltonian isotopy class.

\subsection{Proof of Theorem \ref{main thm1}}

We prove the theorem in the following steps.  

\subsubsection{Setting}

Let \(\Phi \in \Homeo(\Sigma, \omega)\) be the extension of \(\phi\) such that \(\Phi(P) = P\). By shrinking \(U\) if necessary, we may assume that \(\overline{U} \cap P = \emptyset\). 

\subsubsection{Perturbation of \(\Phi\) to a rational map}

By Lemma \ref{fine approximation}, \(\Phi = \lim_{C^0} \phi^{H_n}\) for some \(\phi \in \Diff(\Sigma, \omega)\) and \(\{H_n\}_{n \geq 1}\) in \(C^{\infty}(\R/\Z \times \Sigma)\). By \cite[Lemma 5.4]{CGPZ}, there exists a symplectic vector field \(X\) on $\Sigma$, compactly supported on $\Sigma_P$, such that \(\varphi_X^{1} \circ \phi\) is rational. It follows that \(\Phi' = \lim_{C^0} \phi_n\) is rational, where $\Phi' = \varphi_{X}^1 \circ \Phi$ and \(\phi_n = (\varphi_X^{1} \circ \phi)^{H_n}\). 

\subsubsection{Applying the quantitative closing lemma} 

Let \( H \) be a \((U, a, l)\)-admissible Hamiltonian as defined in \cite[Definition 1.12]{EH} (\(a\) and \(l\) are positive constants). According to \cite[Corollary 2]{CGPPZ} and \cite[Theorem 7.4]{EH}, for each \(n\), there exists \(\tau_n \in [0, a/l]\) such that \(\phi_n \circ \varphi_{H}^{\tau_n}\) has a periodic orbit \(S_n\) passing through \(U\), with \(\lvert S_n \rvert \leq d_n\). Here, \(d_n\) depends on \(a\), \(l\), and the Hamiltonian isotopy class of \(\phi_n\). Since all \(\phi_n\) are Hamiltonian isotopic, we have \(d_n = d(a,l)\). By choosing subsequences, we assume that \(\tau_n \to \tau \in [0, a/l]\), and that \(S_n\) all have the same cardinality. It follows that a subsequence of \(\{S_n\}_{n \geq 1}\) converges to a periodic orbit \(S\) of \(\Phi' \circ \varphi_{H}^{\tau}\) that passes through \(\overline{U}\). Let \(\phi' =  \varphi_{X}^{1}|_{\Sigma_P} \circ \phi \circ \varphi_{H}^{\tau}|_{\Sigma_P}\). Since \(\phi'\) and \(\phi\) coincide outside a compact subset of \(\Sigma_P\), we can take \(X\) and \(H\) to be \(C^{\infty}\)-small so that \(\phi' \in V\). 

\begin{rem}
Rational area-preserving homeomorphisms can also be characterized using algebraic topological methods. In \cite{Schwartzman}, Schwartzman introduced the asymptotic cycle of a measure-preserving flow on a compact manifold. For $\Phi \in \Homeo(\Sigma, \omega)$, we denote by $\cC(\Phi) \in H_1(M_{\Phi}; \R)$ the asymptotic cycle associated with the suspension flow on the mapping torus $M_{\Phi}$, which preserves the measure $dt \otimes \text{Area}(\Sigma)^{-1}\omega = dt \otimes \omega$. In \cite{Prasad2}, Prasad established dynamical properties of maps with a rational asymptotic cycle, i.e., $\cC(\Phi) \in H_1(M_{\Phi}; \Q)$. When $\Phi$ is isotopic to the identity (in which case $\Phi$ is said to have a rational rotation vector), similar results were established independently by Guih\'eneuf-Le Calvez- Passeggi \cite{GLCP} using topological methods. In particular, \cite[Lemma 3.2]{Prasad2} implies that an area-preserving diffeomorphism is rational if and only if it has a rational asymptotic cycle. It is not difficult to show that the same statement holds for area-preserving homeomorphisms.
\end{rem}

\section{The PFH Weyl law for rational area-preserving homeomorphisms}
\label{Weyl law section}

We use Lemma \ref{fine approximation} to formulate the PFH Weyl law for rational area-preserving homeomorphisms on closed surfaces. This provides a potentially useful tool for studying surface dynamics in a continuous setting.

\subsection{Formulation}
\label{formulation}
Let \( \Phi \in \Homeo(\Sigma, \omega) \) be a rational area-preserving homeomorphism. By Lemma \ref{fine approximation}, we can write \( \Phi = \lim_{C^0} \phi^{H_n} \), where \( \phi \in \Diff(\Sigma, \omega) \) is rational. Let \( \{\gamma_m\}_{m \geq 1} \subset M_{\phi} \) be a sequence of reference cycles with degrees \( d(\gamma_m) \) tending to infinity. For each \( m \geq 1 \), choose a non-zero class \( \sigma_m \in \Tw(\phi, \gamma_m) \). We are interested in the following question.

\begin{quest}
\label{main question}
Given $K \in C^{\infty}(\R/\Z \times \Sigma$). Is it true that
\begin{equation}
\label{Weyl law for homeomorphisms}
    \lim_{m \to \infty} \left(\lim_{n \to \infty} \frac{c_{\sigma_m}(\phi^{H_n \# K
    },\gamma_{m}^{H_n \# K}) - c_{\sigma_m}(\phi^{H_n},\gamma_m^{H_n}) + \int_{\gamma_m^{H_n}}K \,dt}{d_m} \right) = \int_{0}^{1}\int_{\Sigma} K \, \omega \wedge dt \hspace{1mm}?
\end{equation}
\end{quest}

Note that if we exchange the two limits in equation (\ref{Weyl law for homeomorphisms}), the equation would hold by (\ref{CGPZ weyl law}). Therefore, Question \ref{main question} can be broken down to the following two questions.

\begin{quest}
\label{inner limit}
Given $\gamma \subset M_{\phi}$, $\sigma \in \Tw(\phi,\gamma)$ and $K \in C^{\infty}(\R/\Z \times \Sigma)$. Does the $C^0$-convergence $\{\phi^{H_n}\}_{n \geq 1} $ imply that the existence of the limit
    \begin{equation*}
        \lim_{n \to \infty} c_{\sigma}(\phi^{H_n \# K}, \gamma^{H_n \# K}) + \int_{\gamma^{H_n}} K \, dt \hspace{1mm}?
    \end{equation*}
\end{quest}

\begin{quest}
\label{exchange limits}
Are the inner and outer limits in (\ref{Weyl law for homeomorphisms}) interchangeable? 
\end{quest}

We are interested in answering Question \ref{inner limit} and Question \ref{exchange limits} in full generality. In the next section, we provide partial answers to these questions and prove a variant of (\ref{Weyl law for homeomorphisms}) when \( \Sigma = S^2 \). Specifically, we take \( \phi = \text{Id}_{S^2} \) and an arbitrary sequence of Hamiltonians \( \{H_n\}_{n \geq 1} \) in \( C^{\infty}(\mathbb{R}/\mathbb{Z} \times S^2) \). Since there is no requirement for the \( C^0 \)-convergence of \( \{\phi^{H_n}\}_{n \geq 1} \), we replace the inner limit with \(\limsup\). Additionally, $\{\gamma_m\}_{m \geq 1}$ and \( \{\sigma_m\}_{m \geq 1} \) are special choices of PFH data, which we explain below. 

\subsection{A variant of (\ref{Weyl law for homeomorphisms})}
Let \( \Sigma \) be a closed surface of genus \( g \). Fix \( x \in \Sigma \). Let \(\gamma = S^1 \times \{x\} \subset S^1 \times \Sigma = M_{\text{Id}_{\Sigma}}\) be a reference cycle of degree 1. For each $d > g$, let \(\sigma_{\heartsuit}^d \in \Tw(\text{Id}_{\Sigma}, d \cdot \gamma)\) be the \textit{PFH unit class} defined in \cite[Section 7]{Chen2}. We will review its definition in Section \ref{PFH unit class}. 

Given \(H \in C^{\infty}(\R/\Z \times \Sigma)\), write \(c_d(H) = c_{\sigma_{\heartsuit}^d}(\varphi_{H}^{1}, d \cdot \gamma^H)\). When $\Sigma = S^2$, we establish the following variant of  (\ref{Weyl law for homeomorphisms}). 

\begin{thm}
\label{double limit Weyl law}
Let $\Sigma = S^2$. Let $K$ and $\{H_n\}_{n \geq 1}$ be Hamiltonians in $C^{\infty}(\R/\Z \times S^2)$. We have
\begin{equation}
\label{the weyl law}
\lim_{d \to \infty}\left(\limsup_{n \to \infty}\frac{c_d(H_n \# K) - c_d(H_n) + \int_{d \cdot \gamma^{H_n}}K dt }{d} \right) = \int_{0}^{1} \int_{S^2} K\, \omega \wedge dt.
\end{equation}
\end{thm}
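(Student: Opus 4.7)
The plan is to strengthen the standard PFH Weyl law (\ref{CGPZ weyl law}) for $\phi = \text{Id}_{S^2}$ to a quantitative estimate that is uniform in the Hamiltonian, so that $\limsup_n$ and $\lim_d$ in (\ref{the weyl law}) can be interchanged. Write $f_n(d)$ for the expression inside the limsup in (\ref{the weyl law}).

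I would first apply Hofer-Lipschitz continuity (\ref{hofer-lipschitz}) with $\phi = \text{Id}_{S^2}$, reference cycle $d \cdot \gamma$, and the two Hamiltonians $H_n \# K$ and $H_n$. Using $(H_n \# K) - H_n = K \circ (\varphi_{H_n}^t)^{-1}$ together with the change-of-variable identity $\int_{d \cdot \gamma}(K \circ (\varphi_{H_n}^t)^{-1}) \, dt = \int_{d \cdot \gamma^{H_n}} K \, dt$ coming from $\Psi_{H_n}$, this produces the a priori bound $\int_0^1 \min K_t \, dt \leq f_n(d) \leq \int_0^1 \max K_t \, dt$, uniform in $n$ and $d$. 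Moreover, for each fixed $n$, the diffeomorphism $\varphi_{H_n}^1$ is Hamiltonian isotopic to $\text{Id}_{S^2}$, hence rational, so the standard PFH Weyl law applied to $\varphi_{H_n}^1$ with perturbing Hamiltonian $K$ already gives $\lim_d f_n(d) = \int_{0}^{1} \int_{S^2} K \, \omega \wedge dt$. What remains is to upgrade this to uniform convergence in $n$.

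For $\Sigma = S^2$ the required uniformity should come from HF spectral invariants. Chen's open-closed morphism (see the discussion around (\ref{HF spectral invariants})) allows one to compare the twisted PFH of $\varphi_H^1$ with the Floer homology of a suitable monotone $d$-component link $\underline{L}_d \subset S^2$. On $S^2$, the associated HF/link spectral invariants admit $C^0$-continuous extensions and satisfy a uniform asymptotic formula $c_{\underline{L}_d}(H)/d \to \int_0^1 \int_{S^2} H \, \omega \wedge dt$, with error tending to zero uniformly on $C^0$-bounded sets of Hamiltonians. Transferring this statement through the open-closed morphism should yield a quantitative PFH Weyl law of the shape
\begin{equation*}
\left| \frac{c_d(H)}{d} - L(H) \right| \leq \epsilon_d, \qquad L(H) = \int_0^1 \int_{S^2} H \, \omega \wedge dt - \int_0^1 H(t, x) \, dt + C,
\end{equation*}
with $\epsilon_d \to 0$ independent of $H$ in the relevant class.

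Granted such an estimate, applying it to $H_n$ and to $H_n \# K$ and subtracting, the terms $\int (H_n \# K) \omega - \int H_n \omega$ collapse to $\int K \omega$ by area-preservation of $\varphi_{H_n}^t$, the $\int_0^1 H_n(t, x) \, dt$ contributions cancel algebraically, and the residual $\int_0^1 K(t, (\varphi_{H_n}^t)^{-1}(x)) \, dt = d^{-1} \int_{d \cdot \gamma^{H_n}} K \, dt$ is precisely the correction appearing in $f_n(d)$. Altogether $f_n(d) = \int_0^1 \int_{S^2} K \, \omega \wedge dt + O(\epsilon_d)$ uniformly in $n$, and taking $\limsup_n$ followed by $\lim_d$ gives (\ref{the weyl law}). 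The hard part is the third step: producing the uniform quantitative Weyl law requires carefully tracking the error terms in Chen's open-closed morphism and exploiting the $C^0$-continuity of HF spectral invariants on $S^2$, both of which are essentially genus-zero features and are the reason the method is not expected to extend to surfaces of higher genus.
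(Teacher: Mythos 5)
Your first step (the Hofer--Lipschitz a priori bound $\int_0^1 \min K_t\,dt \le f_n(d) \le \int_0^1 \max K_t\,dt$) matches the paper exactly, and you correctly identify that the genus-zero specificity lives in Chen's HF spectral invariants and the open-closed morphism. But your proposed mechanism for exchanging the limits --- a \emph{uniform quantitative Weyl law} $|c_d(H)/d - L(H)| \le \epsilon_d$ with $\epsilon_d \to 0$ independent of $H$ --- is a substantially stronger assertion than what is available, and you flag it yourself as ``the hard part'' without filling it in. That is a genuine gap: the known Weyl law for PFH spectral invariants gives convergence for each fixed Hamiltonian, and Chen's comparison $c_d(H) + d\int_0^1 H(t,x)\,dt - 1 \le c_{\underline L}^+(H) \le c_d(H) + d\int_0^1 H(t,x)\,dt$ gives only a degree-$d$-independent $O(1)$ bridge between $c_d$ and $c_{\underline L}^+$; neither of these, off the shelf, produces an error $\epsilon_d$ that is simultaneously $o(1)$ in $d$ and uniform over the whole family $\{H_n, H_n\#K\}$, and you would have to re-derive such an estimate inside the HF theory.

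The paper avoids exactly this issue. Instead of a uniform Weyl law for $c_d(H)$, it proves the triangle-type inequality
\begin{equation*}
\left| c_d(H\#K) - c_d(H) - c_d(K) - \int_{d\cdot\gamma}K\,dt + \int_{d\cdot\gamma^H}K\,dt \right| \le 3,
\end{equation*}
which drops out directly from combining Chen's HF subadditivity $c_{\underline L}^+(H\#K) \le c_{\underline L}^+(H) + c_{\underline L}^+(K) \le c_{\underline L}^+(H\#K) + 1$ with his PFH/HF comparison. This inequality eliminates the $H_n$-dependence of $c_{d,n}(K)$ \emph{algebraically} at the cost of a fixed additive error $3$, after which one only needs the ordinary (non-uniform, single-Hamiltonian) CGPZ Weyl law applied to $K$ alone; the $3/d$ error vanishes trivially. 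So the paper never needs any uniformity of the Weyl law. Your plan could likely be repaired by replacing the hoped-for uniform Weyl law with this subadditivity argument, and you already had the right ingredients (Chen's inequalities) in hand --- the gap is in how they are assembled, not in which tools to reach for.
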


In particular, we address Question \ref{inner limit} and Question \ref{exchange limits} as follows: 
\begin{itemize}
    \item For each $d \geq 1$, the inner $\limsup$ in (\ref{the weyl law}) exists by the Hofer-Lipschitz continuity (\ref{hofer-lipschitz}).
    \item When \( \Sigma = S^2 \), the spectral invariants of the PFH unit class satisfy a triangle-type inequality. Namely, for \( d \geq 1 \) and for any two Hamiltonians \( H, K \in C^{\infty}(\mathbb{R}/\mathbb{Z} \times S^2) \), we have: 
\begin{equation}
\label{triangle inequality}
    \left| c_d(H \# K) - c_d(H) -c_d(K) - \int_{d \cdot \gamma}Kdt
    + \int_{d \cdot \gamma^{H}}K dt   \right| \leq 3
\end{equation}
Hence, one can heuristically eliminate the \( H_n \)-dependence of the term \( c_d(H_n \# K) - c_d(H_n) + \int_{d \cdot \gamma^{H_n}} K \, dt \).
\end{itemize}

\subsection{Proof of Theorem \ref{double limit Weyl law}} 

We now give a detailed proof of Theorem \ref{double limit Weyl law}. 

\subsubsection{Existence of the inner lim sup}
For each \((d, n) \in \N^{*} \times \N^{*}\), set
\begin{equation*}
    c_{d,n}(K) = \frac{c_d(H_n \# K) - c_d(H_n) + \int_{d \cdot \gamma^{H_n}} K \, dt}{d}
\end{equation*}
Then $ d \cdot c_{d,n}(K) = c_d(H_n \# K) - c_d(H_n) + \int_{d \cdot \gamma_0}(H_n \# K - H_n) \, dt$. By (\ref{hofer-lipschitz}),
\begin{equation*}
    d \cdot \int_{0}^{1} \min (H_n \# K - H_n)_t \, dt \leq d \cdot c_{d,n}(K) \leq d \cdot \int_{0}^{1} \max (H_n \# K - H_n)_t \, dt.
\end{equation*}
Thus, $ \int_{0}^{1} \min K_t \, dt \leq c_{d,n}(K) \leq \int_{0}^{1} \max K_t \, dt$, which implies \(\limsup _{n \to \infty} c_{d,n}(K) < \infty\) for each \(d \geq 1\).

\subsubsection{The outer limit}
Assume (\ref{triangle inequality}) holds. For each \(d \geq 1\), we have
\begin{equation*}
   \left\lvert d \cdot c_{d,n}(K) - c_d(K) - \int_{d \cdot \gamma} K \, dt \right\rvert \leq 3.
\end{equation*}
Thus,
\begin{equation*}
    \left\lvert \limsup_{n \to \infty} c_{d,n}(K) - \frac{c_d(K) + \int_{d \cdot \gamma} K \, dt}{d} \right\rvert \leq \frac{3}{d}.
\end{equation*}
It follows that
\begin{equation*}
     \lim_{d \to \infty} \left(\limsup_{n \to \infty} c_{d,n}(K)\right) = \lim_{d \to \infty} \frac{c_d(K) + \int_{d \cdot \gamma_0} K \, dt}{d} = \int_{[0,1] \times S^2} K \, \omega \wedge dt,
\end{equation*}
where the last equality follows from (\ref{CGPZ weyl law}). 

\subsection{Proof of (\ref{triangle inequality})}

It remains to prove the triangle-type inequality (\ref{triangle inequality}). To do this, we use the results in \cite{Chen}, which compare PFH and HF spectral invariants.

\subsubsection{Properties of the HF spectral invariants}

Let \(\underline{L} \subset S^2 \) be a 0-admissible link and \(e \in HF(S^2, \underline{L})\) be the \textit{HF unit class} (\cite[Definition 1.1 and 3.7]{Chen}). For \(H \in C^{\infty}(\R/\Z \times S^2)\), write \(c_{\underline{L}}^{+}(H) = c_{\underline{L}}(H, e)\), where \(c_{\underline{L}}\) is the map in (\ref{HF spectral invariants}). By \cite[Theorem 1 and Lemma 6.2]{Chen}, the triangle inequalities:
\begin{equation}
\label{HF triangle inequality}
    c_{\underline{L}}^{+}(H \# K) \leq c_{\underline{L}}^{+}(H) + c_{\underline{L}}^{+}(K) \leq c_{\underline{L}}^{+}(H \# K) + 1
\end{equation}
hold for any \(H, K \in C^{\infty}(\R/\Z \times S^2)\). 

\subsubsection{Comparing PFH and HF spectral invariants}

By \cite[Corollary 1.3]{Chen}, for any \(H \in C^{\infty}(\R/\Z \times S^2)\), we have
\begin{equation}
\label{comparison}
    c_d(H) + d \int_{0}^{1} H(t,x) \, dt - 1 \leq c_{\underline{L}}^{+}(H) \leq c_d(H) + d \int_{0}^{1} H(t,x) \, dt.
\end{equation}
By (\ref{comparison}) and the first inequality in (\ref{HF triangle inequality}), we have
\begin{align*}
    c_d(H \# K) \leq &c_{\underline{L}}^{+}(H \# K)  - d\int_{0}^{1}(H \# K)(t,x) \, dt + 1 \\
    \leq &c_{\underline{L}}^{+}(H) + c_{\underline{L}}^{+}(K) - d\int_{0}^{1}(H \# K)(t,x) \, dt + 1 \\
    \leq &c_d(H) + c_d(K) + d\int_{0}^{1}H(t,x) \, dt + d\int_{0}^{1}K(t,x) \, dt - d\int_{0}^{1}(H \# K)(t,x) \, dt + 1 \\
    = &c_d(H) + c_d(K) + d\int_{0}^{1} K(t,x) \, dt - K(t, (\varphi_H^t)^{-1}(x)) \, dt + 1 \\
    = &c_d(H) + c_d(K) + \int_{d \cdot \gamma} K \, dt - \int_{d \cdot \gamma^H} K \, dt + 1
\end{align*}
A similar computation using (\ref{comparison}) and the second inequality in (\ref{HF triangle inequality}) shows that
\begin{equation*}
   c_d(H \# K) \geq c_d(H) + c_d(K) + \int_{d \cdot \gamma} K \, dt - \int_{d \cdot \gamma^H} K \, dt - 3.
\end{equation*}
This finishes the proof of (\ref{triangle inequality}).

\subsection{The PFH unit class}
\label{PFH unit class}
 We recall the definition of the PFH unit class and explain why the results in \cite{Chen} imply (\ref{triangle inequality}) only when $\Sigma = S^2$. Assume that $\Sigma$ is a closed surface with genus $g$. For each $d > g$, one can choose a \( C^2 \)-small perfect Morse function \( H_{\text{Morse}}: \Sigma \to \mathbb{R} \) such that the only periodic orbits of \( \varphi_{H_{\text{Morse}}}^{1} \) with period \( \leq d \) are the critical points of \( H_{\text{Morse}} \) and their iterates. Denote the critical points by \( e_{+}, h_1, \dots, h_{2g}, e_{-} \), where \( e_{+} \) has (Morse) index 2, \( e_{-} \) has index 0, and each \( h_{1}, \dots, h_{2g} \) has index 1. It follows that that \( e_{+} \) and \( e_{-} \) are elliptic orbits, and \( h_{1}, \dots, h_{2g} \) are positive hyperbolic orbits.

If \( \alpha \) is a PFH generator in the homology class \( d \cdot [\gamma^{H_{\text{Morse}}}] \), then we can write \( \alpha = e_{+}^{m_{+}} h_1^{m_1} \dots h_{2g}^{m_{2g}} e_{-}^{m_{-}} \), where \( m_{+} + m_1 + \dots + m_{2g} + m_{-} = d \) and \( m_i \in \{0, 1\} \). A twisted PFH generator can be expressed as \( (\alpha, Z_{\alpha} + k[\Sigma] + [S]) \), where \( k \in \mathbb{Z} \), \( S \in H_1(S^1; \mathbb{Z}) \otimes H_1(\Sigma; \mathbb{Z}) \), and \( Z_{\alpha} \in H_2(M_{\varphi_{H_{\text{Morse}}}^{1}}, \alpha, d \cdot \gamma^H) \) is a reference relative homology class defined in \cite[(5.32)]{Chen}. As explained in \cite[Example 2.18]{EH}, we can choose an almost complex structure \( J \) on \( \mathbb{R} \times M_{\varphi_{H_{\text{Morse}}}^{1}} \) such that the only \( J \)-holomorphic curves that contribute to the PFH differential are \( J \)-holomorphic cylinders corresponding to the Morse flow lines on \( \Sigma \). Since \( H_{\text{Morse}} \) is perfect, it follows that the PFH differential vanishes, and each twisted PFH generator defines a homology class.

Let \( \sigma_{\heartsuit}^d = (e_{+}^d, Z_{e_{+}^d}) \) and \( \sigma_{\diamond}^d = (e_{-}^d, Z_{e_{-}^d}) \). In \cite[Corollary 1.2]{Chen}, Chen estimated the HF spectral invariants in terms of the PFH spectral invariants, with the upper bound computed using \( \sigma_{\heartsuit}^d \) and the lower bound computed using \( \sigma_{\diamond}^d \). When \( \Sigma = S^2 \), we have \( U^d \sigma_{\heartsuit}^d = \sigma_{\diamond}^d \), and Chen used this relation to deduce equation (\ref{comparison}). However, when \( \Sigma \neq S^2 \), we have no general understanding of how \( \sigma_{\heartsuit}^d \) and \( \sigma_{\diamond}^d \) relate via the \( U \)-map. Hence, equation (\ref{comparison}) might no longer hold on surfaces of higher genus.

\section{Parametric transversality}
In this section, we prove certain technical transversality results required for the proof of Theorem \ref{thm2}. The statements and proofs are largely based on \cite[Lemma 5.1–5.3]{Prasad}, with details modified to suit our setting. 

\begin{lem}
\label{parametric transversality}
Let \(\phi \in \Diff(\Sigma,\omega)\) with \(\phi(P) = P\). Fix \(N \geq 1\). For a \(C^{\infty}\)-generic \(H \in C^{\infty}([0,1]^N \times \R/\Z \times \Sigma ;P)\), there exists a full measure subset of \(\tau \in [0,1]^N\) for which \(\phi^{H^{\tau}}\) is non-degenerate.
\end{lem}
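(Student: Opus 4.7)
The plan is to follow the parametric transversality framework of \cite[Lemma 5.1--5.3]{Prasad}, adapted for the constraint $dH^{\tau}|_{P} = 0$ and the punctured setting. By a Baire category intersection, it suffices to show that for each fixed $d \geq 1$, the set of $H \in C^{\infty}([0,1]^N \times \R/\Z \times \Sigma ;P)$ for which $\phi^{H^{\tau}}$ is $d$-non-degenerate for almost every $\tau \in [0,1]^N$ is residual. A countable intersection over $d$ then produces a residual set of $H$ with a full-measure set of $\tau$ for which $\phi^{H^{\tau}}$ is non-degenerate in every period simultaneously.

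For fixed $d$, I would introduce the universal moduli space
\[
    \M_d = \{(H,\tau,x) \in \cH^r \times [0,1]^N \times \Sigma_P : (\phi^{H^{\tau}})^d(x) = x\},
\]
where $\cH^r$ denotes the Banach space of $C^r$-Hamiltonians satisfying the constraint at $P$ (for some large fixed $r$). The central technical step is to show that the evaluation map $E(H,\tau,x) = ((\phi^{H^{\tau}})^d(x), x)$ is transverse to the diagonal $\Delta \subset \Sigma_P \times \Sigma_P$. At a solution $(H,\tau,x)$, the orbit segment $\{x, \phi^{H^{\tau}}(x), \dots, (\phi^{H^{\tau}})^{d-1}(x)\}$ is a finite subset of $\Sigma_P$ and hence lies in a compact set disjoint from $P$. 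I would then perturb $H$ by a bump function localized in time and space near a single iterate and away from $P$; a standard first-order computation shows that such perturbations realize any prescribed vector in $T_x \Sigma_P$, which suffices for transversality. The constraint $dH^{\tau}|_P = 0$ causes no trouble because the bump is supported away from $P$.

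Given transversality, $\M_d$ is a smooth Banach submanifold, and Sard-Smale applied to the projection $p: \M_d \to \cH^r$ produces a residual set of regular values $H$, for which $p^{-1}(H) \subset [0,1]^N \times \Sigma_P$ is a smooth $N$-manifold. Next I would analyze the further projection $q: p^{-1}(H) \to [0,1]^N$ between $N$-dimensional manifolds: linearizing the defining equation at $(\tau,x)$ shows $\ker dq|_{(\tau,x)} = \ker\bigl(D(\phi^{H^{\tau}})^d|_x - I\bigr)$, so the critical points of $q$ are precisely the pairs where $x$ is a degenerate period-$d$ fixed point. Sard's theorem for $q$ then gives that the set of $\tau$ carrying such a degenerate orbit has measure zero. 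To pass from $C^r$ to $C^{\infty}$, I would invoke the standard fact that since $C^{\infty}(\cdot;P) \hookrightarrow C^r(\cdot;P)$ is continuous with dense image, the preimage of a $C^r$-residual set is $C^{\infty}$-residual; intersecting also over $r \in \N$ completes the argument.

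The main obstacle is the transversality step: one must choose the time-support of the bump perturbation carefully so that the $d$ iterates of $x$ are cleanly separated within that support, and then verify that the resulting first-order variation of $(\phi^{H^{\tau}})^d(x)$ spans all of $T_x\Sigma_P$. Once this is established, the rest of the argument is a clean application of Sard-Smale and Sard.
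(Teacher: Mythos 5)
Your proposal reproduces, essentially correctly, the Prasad-style parametric transversality argument for periodic orbits lying in the open surface $\Sigma_P$: build a universal moduli space, perturb $H$ near a single iterate away from $P$, apply Sard--Smale to the projection to the Hamiltonian factor, then apply Sard to the projection to $[0,1]^N$. This is exactly the content of the paper's Lemma \ref{PT outside P}. Your observation that the critical set of $q\colon p^{-1}(H)\to [0,1]^N$ is precisely the set of degenerate orbits is a nice and valid rephrasing of the ``bad stratum'' analysis.

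However, there is a genuine gap. You restrict the moduli space to $x\in\Sigma_P$ and then write ``the constraint $dH^{\tau}|_P=0$ causes no trouble because the bump is supported away from $P$.'' But the constraint is precisely what \emph{forces} trouble at $P$: because every admissible Hamiltonian has $dH_{(t,p)}=0$ for $p\in P$, the flow $\varphi_{H^{\tau}}^t$ fixes $P$ pointwise, and since $\phi(P)=P$, the finite set $P$ is a union of periodic orbits of $\phi^{H^{\tau}}$ for \emph{every} choice of $H$ and $\tau$. These orbits cannot be destroyed or moved by any admissible perturbation, so ``non-degenerate'' here includes non-degeneracy of the orbits inside $P$, and your moduli space never sees them. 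They require a separate argument: instead of translating the orbit, one must perturb the linearized return map $D(\phi^{H^{\tau}})^d|_p\in\Sp(T_p\Sigma,\omega)$ by varying the \emph{Hessian} of $H$ at $p$. The paper does this in Lemma \ref{PT inside P} by showing that the map $(H,\tau)\mapsto D(\phi^{H^{\tau}})^d|_p$ is a submersion onto $\Sp(T_p\Sigma,\omega)$, using perturbations of the form $K(t,x,y)=ax^2+bxy+cy^2$ in Darboux coordinates centered at $p$ (these satisfy $dK|_p=0$, hence are admissible, yet generate arbitrary traceless endomorphisms $\X_K$). Without this second half --- which the paper packages as the splitting ``every periodic orbit lies in $\Sigma_P$ or in $P$'' and then treats each case --- the proof is incomplete.

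A secondary caution: your moduli space $\M_d=\{(H,\tau,x): (\phi^{H^{\tau}})^d(x)=x\}$ does not exclude points of period strictly less than $d$. For such a point the $d$ iterates collide, and a bump supported near ``a single iterate'' contributes several times to the first-order variation, which can cancel (e.g.\ $D\phi|_x=-I$ at a fixed point and $d=2$). The paper sidesteps this by working over $\Sigma_P^d\setminus\Delta^d$ (simple orbits only) and handling iterates through the root-of-unity stratification of $\Sp$; you should either restrict to simple orbits in the same way or verify the transversality computation with care in the degenerate configurations.
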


\begin{lem}
\label{non-degeneracy}
Let \(\Phi \in \Homeo(\Sigma, \omega)\) such that \(\Phi(P) = P\) and \(\Phi|_{\Sigma_P} \in \Diff(\Sigma_P, \omega_0)\). Let \(S_1, \dots, S_k\) be simple periodic orbits of \(\Phi\) which lie in \(\Sigma_P\). For \(C^{\infty}\)-dense \(H \in C^{\infty}(\R/\Z \times \Sigma; P)\), \(S_1, \dots, S_k\) are non-degenerate periodic orbits of \(\Phi^H\).
\end{lem}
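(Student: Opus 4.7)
The strategy is parametric transversality in the spirit of Lemma \ref{parametric transversality}. The first point to settle is how $S_1,\dots,S_k$ can remain periodic orbits of $\Phi^H = \Phi\circ\varphi_H^1$ at all: writing $S_i=\{x_1^{(i)},\dots,x_{d_i}^{(i)}\}$, the orbit condition is equivalent to $\varphi_H^1(x_j^{(i)}) = x_j^{(i)}$ for every $j$. A convenient sufficient condition is $dH_t(x_j^{(i)}) = 0$ for every $t\in\R/\Z$, under which $\varphi_H^t$ fixes each $x_j^{(i)}$ throughout the isotopy. I would therefore read the density statement inside the closed linear subspace
\begin{equation*}
\mathcal{Z} = \bigl\{H\in C^{\infty}(\R/\Z\times\Sigma;P) : dH_t(x_j^{(i)}) = 0 \text{ for all } i,j,t\bigr\} \subset C^{\infty}(\R/\Z\times\Sigma;P),
\end{equation*}
inside which $\Phi^H$ automatically has every $S_i$ as a periodic orbit, and the claim becomes $C^{\infty}$-density of the non-degenerate locus in $\mathcal{Z}$.

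Given $H_0\in\mathcal{Z}$, I would choose pairwise disjoint Darboux charts $(V_j^{(i)},(p,q))$ around each $x_j^{(i)}$, all contained in $\Sigma_P$, together with smooth cutoffs $\beta_j^{(i)}$ equal to $1$ near $x_j^{(i)}$ and supported in $V_j^{(i)}$. With $N = 3\sum_i d_i$, I would introduce the finite-parameter family
\begin{equation*}
H^{\tau}(t,x) = H_0(t,x) + \sum_{i,j}\beta_j^{(i)}(x)\bigl(\tau_{i,j,1}p^2 + \tau_{i,j,2}pq + \tau_{i,j,3}q^2\bigr), \qquad \tau\in[-\delta,\delta]^N.
\end{equation*}
Each added summand vanishes to second order at its center, so $H^{\tau}\in\mathcal{Z}$ and $d_{C^{\infty}}(H^{\tau},H_0)\to 0$ as $\delta\to 0$; it therefore suffices to exhibit an arbitrarily small $\tau$ making every $S_i$ non-degenerate for $\Phi^{H^{\tau}}$.

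For this, observe that the relevant linearization decomposes as $D(\Phi^{H^{\tau}})^{d_i}\big|_{x_1^{(i)}} = \prod_{j=d_i}^{1} D\Phi|_{x_j^{(i)}}\cdot D\varphi_{H^{\tau}}^{1}\big|_{x_j^{(i)}}$, a product in $\Sp(2,\R)$. Since $\{p^2,pq,q^2\}$ is a basis of $\mathfrak{sp}(2,\R)$ and $D\varphi_{H_0}^1\big|_{x_j^{(i)}}$ is an invertible symplectic linear map, the assignment $\tau\mapsto D\varphi_{H^{\tau}}^{1}\big|_{x_j^{(i)}}$ is submersive in the three parameters attached to $x_j^{(i)}$ at $\tau=0$, and hence $\tau\mapsto D(\Phi^{H^{\tau}})^{d_i}\big|_{x_1^{(i)}}$ is submersive in the $\tau_{i,\ast,\ast}$ attached to $S_i$. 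The locus in $\Sp(2,\R)$ where $1$ is an eigenvalue is semi-algebraic of codimension one, so its preimage in $[-\delta,\delta]^N$ is codimension one with open dense complement. Intersecting over $i=1,\dots,k$ yields an open dense set of $\tau$ on which every $S_i$ is non-degenerate for $\Phi^{H^{\tau}}$, and any element of this set near $0$ supplies the required approximation of $H_0$.

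The main obstacle, beyond the interpretive point settled in the first paragraph, is verifying carefully that a local three-parameter quadratic perturbation really does generate all infinitesimal variations of $D\varphi_H^1$ at the orbit point in $\Sp(2,\R)$; this reduces to linearizing the time-one flow of a small Hamiltonian vector field and identifying the infinitesimal image with $\mathfrak{sp}(2,\R)$ via the standard Hessian-to-$\mathfrak{sp}(2,\R)$ correspondence, then transporting by the invertible factor $D\varphi_{H_0}^1|_{x_j^{(i)}}$. The lack of smoothness of $\Phi$ at $P$ never enters the argument, since the orbit points $x_j^{(i)}$ and the supports of all perturbations sit inside $\Sigma_P$, where $\Phi$ is a diffeomorphism.
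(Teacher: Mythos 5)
Your interpretive observation in the first paragraph is correct and worth emphasizing: as written, the map $\rho_{S_i}$ in the paper's proof is defined on all of $C^l(\R/\Z\times\Sigma;P)$ and is asserted to land in $\Sp(T_{x_1}\Sigma,\omega)$, but this only makes sense when $(\Phi^H)^{d_i}(x_1^{(i)}) = x_1^{(i)}$, i.e.\ when $\varphi_H^1$ fixes the orbit points. This is automatic in Lemma~\ref{PT inside P} (where the orbit sits in $P$ and the definition of $C^l(\R/\Z\times\Sigma;P)$ forces $dH$ to vanish there), but not for Lemma~\ref{non-degeneracy}. Restricting to your closed subspace $\mathcal{Z}$ and proving density there is the right reading of the statement, and your finite-parameter family replacing the Banach-space/Sard--Smale framing is a perfectly good route.

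There is, however, a genuine gap in the transversality computation. You perturb additively, $H^{\tau} = H_0 + K_{\tau}$, and then assert that $\tau\mapsto D\varphi_{H^{\tau}}^{1}|_{x_j^{(i)}}$ is submersive because $\{p^2,pq,q^2\}$ exponentiates to a basis of $\mathfrak{sp}(2,\R)$ and one ``transports by'' $D\varphi_{H_0}^1|_{x_j^{(i)}}$. That reasoning would be valid if the flows composed, i.e.\ if $\varphi_{H_0+K_\tau}^1 = \varphi_{H_0}^1\circ\varphi_{K_\tau}^1$, but they do not. Writing $S_t = D\varphi_{H_0}^t|_{x_j^{(i)}}$ and linearizing the variational equation, one finds instead
\begin{equation*}
\left.\frac{\partial}{\partial\tau_k}\right|_{\tau=0} D\varphi_{H^{\tau}}^{1}\big|_{x_j^{(i)}} \;=\; S_1\cdot\int_{0}^{1} S_t^{-1}\, B_k\, S_t\,dt,
\end{equation*}
where $B_k\in\mathfrak{sp}(2,\R)$ is the linearized Hamiltonian vector field of the $k$-th basic quadratic. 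The operator $\int_0^1 \mathrm{Ad}_{S_t^{-1}}\,dt$ on $\mathfrak{sp}(2,\R)$ need not be invertible: if $H_0$ generates near $x_j^{(i)}$ the quadratic $\tfrac{\pi}{2}(p^2+q^2)$, so that $S_t$ is rotation by $\pi t$, then $\int_0^1 S_t^{-1} A S_t\,dt = 0$ for every traceless symmetric $A$, and the integrated adjoint has rank one. In that situation your three parameters at $x_j^{(i)}$ fail to be a submersion onto $T\Sp(2,\R)$, and the argument breaks.

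The fix is exactly the device the paper uses in \ref{transversality construction}: perturb by right $\#$-composition rather than addition. Taking $H^{\tau}$ with ${H^{\tau}} = H_0 \# K_{\tau}$ (with $K_{\tau}$ supported near $x_1^{(i)}$ and vanishing to second order there) gives $\varphi_{H^{\tau}}^1 = \varphi_{H_0}^1\circ\varphi_{K_{\tau}}^1$ on the nose, hence $D\varphi_{H^{\tau}}^1|_{x_1^{(i)}} = D\varphi_{H_0}^1|_{x_1^{(i)}}\cdot\exp(\X_{K_\tau})$ with no time-integral, and submersiveness follows immediately since $\tau\mapsto\X_{K_\tau}$ is onto $\mathfrak{sp}(2,\R)$. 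A one-point perturbation at $x_1^{(i)}$ per orbit $S_i$ already suffices; one does not need to decorate every orbit point. With this change your proof is correct, in the same spirit as the paper's, and has the merit of making explicit the domain restriction that the paper leaves implicit.
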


\subsection{Proof of Lemma \ref{parametric transversality}}

Note that for fixed \( H \in C^{\infty}([0,1]^N \times \R/\Z \times \Sigma ;P) \) and \( \tau \in [0,1]^N \), every periodic orbit of \( \phi^{H^{\tau}} \) lies either in \( \Sigma_P \) or in \( P \). Thus, we prove Lemma \ref{parametric transversality} by combining Lemma \ref{PT outside P} and Lemma \ref{PT inside P}, which address parametric transversality inside and outside the set of punctures, respectively. 

\begin{lem}
\label{PT outside P}
Let \(\phi \in \Diff(\Sigma,\omega)\) with \(\phi(P) = P\). Fix \(N \geq 1\). For a \(C^{\infty}\)-generic \(H \in C^{\infty}([0,1]^N \times \R/\Z \times \Sigma ;P)\), there exists a full measure subset of \(\tau \in [0,1]^N\) such that every periodic orbit of \(\phi^{H^{\tau}}\) which lies in $\Sigma_P$ is non-degenerate. 
\end{lem}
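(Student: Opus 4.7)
The plan is to follow the parametric Sard--Smale framework of \cite[Lemma 5.1]{Prasad}, adapting it to the constraint $dH|_{P} = 0$. The key observation is that every periodic orbit under consideration lies in $\Sigma_P$, so Hamiltonian perturbations supported away from $P$ are freely available and automatically belong to $C^{\infty}([0,1]^N \times \R/\Z \times \Sigma; P)$.

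First I would reduce to a countable problem. Fix a compact exhaustion $K_1 \subset K_2 \subset \cdots$ of $\Sigma_P$. For each triple $(d,m,j)$ with $1 \leq m \leq d$, let $\mathcal{R}_{d,m,j}$ denote the set of $H$ for which $\phi^{H^{\tau}}$ admits no degenerate $m$-periodic orbit meeting $K_j$ for Lebesgue-almost every $\tau \in [0,1]^N$. If each $\mathcal{R}_{d,m,j}$ is Baire-generic in the $C^{\infty}$-topology, then so is the countable intersection $\bigcap_{d,m,j}\mathcal{R}_{d,m,j}$, and any $H$ in that intersection satisfies the conclusion, since the union of bad $\tau$-sets is a countable union of null sets.

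For a fixed triple $(d,m,j)$, I would work in the Banach completion $\cH^{k}$ of $C^{\infty}([0,1]^N \times \R/\Z \times \Sigma; P)$ with respect to a sufficiently high $C^k$-norm, and introduce the universal moduli space
\begin{equation*}
\mathcal{M}_{m,j} = \bigl\{(H,\tau,x) \in \cH^{k} \times [0,1]^N \times K_j : (\phi^{H^{\tau}})^m(x) = x \text{ and } \det(D(\phi^{H^{\tau}})^m|_{x} - I) = 0\bigr\}.
\end{equation*}
The central technical step is a submersion calculation: at each point of $\mathcal{M}_{m,j}$, the defining map $F \colon \cH^{k} \times [0,1]^N \times K_j \to \R^{3}$ should have surjective derivative. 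Since the iterates $x_0 = x, x_1, \dots, x_{m-1}$ all lie in $\Sigma_P$, one chooses admissible time-dependent bump Hamiltonians supported in pairwise disjoint neighborhoods of the $x_i$ inside $\Sigma_P$ --- these automatically satisfy $dH|_P = 0$ --- and a standard Moser-type first-variation computation shows that $D_H F$ already spans $\R^{3}$. Hence $\mathcal{M}_{m,j}$ is a smooth Banach submanifold of codimension $3$, and the projection $\pi \colon \mathcal{M}_{m,j} \to \cH^{k}$ is Fredholm of index $N-1$. Sard--Smale then yields a Baire-generic set of regular values $H \in \cH^{k}$; for each such $H$, the fiber $\pi^{-1}(H)$ is a smooth $(N-1)$-dimensional manifold, so its image in $[0,1]^N$ under the natural projection has Lebesgue measure zero. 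A standard density argument transfers this $C^k$-genericity to $C^{\infty}$-genericity in the Fr\'echet topology, producing $\mathcal{R}_{d,m,j}$.

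The main obstacle is the submersion step. While this is well known on closed surfaces, one must carefully verify that time-dependent Hamiltonian perturbations lying in $C^{\infty}([0,1]^N \times \R/\Z \times \Sigma; P)$ can simultaneously realize arbitrary infinitesimal changes in $(\phi^{H^{\tau}})^m(x)$ and in the scalar $\det(D(\phi^{H^{\tau}})^m|_{x} - I)$. The $dH|_P = 0$ constraint is automatic because the orbit is uniformly bounded away from $P$, so the obstacle reduces to the familiar three-parameter jet computation; nevertheless, one has to keep the bump supports pairwise disjoint and inside $\Sigma_P$, which requires some care when $m$ is large and the orbit winds close to the punctures.
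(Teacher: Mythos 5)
Your overall framework (reduce to a countable intersection over period and compact exhaustion, pass to a Banach completion $\cH^k$, run Sard--Smale on a universal moduli space, then transfer to $C^{\infty}$-genericity) is the same as the paper's, which itself follows \cite[Lemma~5.1]{Prasad}. The essential difference, and the place where your proposal has a genuine gap, is the choice of universal moduli space.

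You define $\mathcal{M}_{m,j}$ using \emph{all} $m$-periodic points $x \in K_j$ together with the single scalar condition $\det\bigl(D(\phi^{H^{\tau}})^m|_{x} - I\bigr) = 0$, and you try to prove the defining map is a submersion by placing time-dependent bump Hamiltonians in \emph{pairwise disjoint} neighborhoods of the iterates $x_0,\dots,x_{m-1}$. This breaks down precisely when the orbit is not simple: if $x$ has underlying period $m' < m$ (for example, a fixed point treated as an $m$-periodic point for some $m>1$), then the iterates coincide and there is no way to choose pairwise disjoint supports. At such a point a single bump Hamiltonian gets composed with itself $m/m'$ times in $(\phi^{H^{\tau}})^m$, and the first-variation formula you invoke no longer decouples the displacement of $x$ from the variation of $D(\phi^{H^{\tau}})^m|_x$; the submersion claim requires a separate, more delicate argument there, which the proposal does not supply. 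Your own caveat misdiagnoses the obstacle: the issue is not orbit points drifting near the punctures (they cannot, since the orbit lives in the compact set $K_j \subset \Sigma_P$) but orbit points coinciding.

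The paper sidesteps this by restricting the universal moduli space to \emph{simple} orbits from the start --- it works in $\Sigma_P^d \setminus \Delta^d$, where $\Delta^d$ is the fat diagonal --- and then controls all iterates at once by requiring the linearized return map to avoid \emph{roots of unity} as eigenvalues, not merely the eigenvalue $1$. That set of bad symplectic matrices is a countable union of positive-codimension submanifolds, so the Sard--Smale count still closes. In short: either restrict to simple orbits and strengthen the spectral condition to ``no root of unity'', as the paper does, or else carry out the submersion calculation at non-simple orbits directly; as written, the proposal does neither, and this is the missing step.
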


\begin{lem}
\label{PT inside P}
Let \(\phi \in \Diff(\Sigma,\omega)\) with \(\phi(P) = P\). Fix \(N \geq 1\). For a \(C^{\infty}\)-generic \(H \in C^{\infty}([0,1]^N \times \R/\Z \times \Sigma ;P)\), there exists a full measure subset of \(\tau \in [0,1]^N\) such that every periodic orbit of \(\phi^{H^{\tau}}\) which lies in $P$ is non-degenerate.
\end{lem}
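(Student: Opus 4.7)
The plan is to reduce the lemma to a parametric transversality statement for an explicit evaluation map. Since \(dH_{(\tau,t,p)} = 0\) for every \(H \in C^{\infty}([0,1]^N \times \R/\Z \times \Sigma; P)\) and every \(p \in P\), the Hamiltonian vector field \(X_{H^{\tau}}\) vanishes on \(P\), so \(\varphi_{H^{\tau}}^{t}\) fixes \(P\) pointwise for all \((\tau,t)\). Consequently \(\phi^{H^{\tau}}|_{P} = \phi|_{P}\) is the fixed permutation \(\phi|_{P}\), and the periodic orbits of \(\phi^{H^{\tau}}\) contained in \(P\) are exactly the orbits \(S_1, \dots, S_r\) of \(\phi|_{P}\) together with their iterates. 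Since there are only finitely many such simple orbits, it suffices to produce, for each \(S = S_j = \{p_1, \dots, p_d\}\), a residual set \(\mathcal{R}_{S} \subset C^{\infty}([0,1]^N \times \R/\Z \times \Sigma; P)\) such that, for \(H \in \mathcal{R}_{S}\), the set of \(\tau\) at which some iterate of \(S\) is degenerate for \(\phi^{H^{\tau}}\) has Lebesgue measure zero; intersecting over the \(S_j\) gives the conclusion.

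Fix such an \(S\) and let \(V = T_{p_1}\Sigma\), equipped with the symplectic form \(\omega_{p_1}\), so \(\mathrm{Sp}(V,\omega_{p_1}) \cong SL(2,\R)\) is a 3-dimensional Lie group. I consider the smooth evaluation map
\[
F : C^{\infty}([0,1]^N \times \R/\Z \times \Sigma; P) \times [0,1]^N \to \mathrm{Sp}(V,\omega_{p_1}), \qquad (H,\tau) \mapsto D(\phi^{H^{\tau}})^{d}|_{p_1}.
\]
The \(k\)-fold iterate of \(S\) is degenerate at \((H,\tau)\) iff \(F(H,\tau) \in B_k := \{A \in SL(2,\R) : \mathrm{tr}(A^k) = 2\}\), which is a finite union of trace level sets; away from the critical points \(\pm \mathrm{Id}\) of the trace function each such level set is a smooth codimension-1 submanifold, while \(\{\pm \mathrm{Id}\}\) is codimension~3. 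If I can show that \(F\) is a submersion in the \(H\)-direction at every point, then standard parametric transversality (Sard--Smale at finite regularity \(C^{\ell}\), followed by the routine passage to \(C^{\infty}\)-residuality via intersection over \(\ell\), as in \cite[Lemmas 5.1--5.3]{Prasad}) furnishes, for each \(k\), a residual \(\mathcal{R}_{S,k}\) on which \(F(H,\cdot)^{-1}(B_k)\) is a finite union of submanifolds of \([0,1]^N\) of codimension at least \(1\), hence of Lebesgue measure zero. Taking \(\mathcal{R}_{S} = \bigcap_{k \geq 1} \mathcal{R}_{S,k}\), a countable intersection of residual sets, completes the reduction.

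The core of the proof is verifying the submersion claim. Fix \((H,\tau)\), write \(M_i(t) = D\varphi_{H^{\tau}}^{t}|_{p_i}\), and let \(J\) denote the matrix of \(\omega_{p_1}\) in a symplectic basis. For a perturbation \(K \in C^{\infty}([0,1]^N \times \R/\Z \times \Sigma; P)\) whose support in \(\Sigma\) lies in a small neighborhood of \(p_1\) disjoint from \(\{p_2, \dots, p_d\}\), only the flow at \(p_1\) is affected to first order in \(s\), and combining the chain rule with the standard variation-of-parameters formula applied to the linearized Hamiltonian ODE \(\dot{M} = J \cdot \mathrm{Hess}_{p_1}(H^{\tau}_{t}) \cdot M\) yields
\[
\frac{d}{ds}\bigg|_{s=0} F(H + sK, \tau) \;=\; R \cdot M_1(1) \cdot \int_{0}^{1} M_1(t)^{-1}\, J\, \mathrm{Hess}_{p_1}(K^{\tau}_{t})\, M_1(t)\, dt,
\]
where \(R = D\phi|_{p_d} M_d(1) \cdots D\phi|_{p_2} M_2(1) \cdot D\phi|_{p_1} \in \mathrm{Sp}(V,\omega_{p_1})\) is invertible. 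The map \(A \mapsto JA\) from symmetric \(2\times 2\) matrices to \(\mathfrak{sl}(2,\R)\) is a linear isomorphism, and conjugation by \(M_1(t)\) acts as an automorphism of \(\mathfrak{sl}(2,\R)\); hence a suitable time-localized choice of \(\mathrm{Hess}_{p_1}(K^{\tau}_{t})\) realizes any prescribed element of \(T_{F(H,\tau)}\mathrm{Sp}(V,\omega_{p_1})\), as desired. Such a \(K\) is easily constructed in the required class as a \(\tau\)-independent product of a spatial bump vanishing to first order at every puncture with a time bump supported near the chosen \(t_0 \in (0,1)\). The main obstacle is executing this computation cleanly and performing the standard but technical passage from finite-regularity Sard--Smale to \(C^{\infty}\)-genericity on the Fr\'echet parameter space; both follow the blueprint of \cite[Lemmas 5.1--5.3]{Prasad}.
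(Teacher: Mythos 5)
Your proposal is correct and takes essentially the same approach as the paper: it reduces to the finitely many orbits of $\phi|_P$, considers the evaluation map $(H,\tau)\mapsto D(\phi^{H^{\tau}})^{d}|_{p_1}$ into the symplectic group, establishes that this map is a submersion by perturbing $H$ near $p_1$ by a Hamiltonian supported away from the other points of the orbit, and concludes via Sard--Smale. The only difference is in computing the $H$-derivative --- the paper exponentiates a time-independent quadratic Hamiltonian in Darboux coordinates to obtain an explicit one-parameter subgroup of $\Sp(T_{p_1}\Sigma,\omega)$, while you invoke the variation-of-parameters formula for the linearized Hamiltonian ODE --- but both routes show the derivative surjects onto $\End_0(T_{p_1}\Sigma)$.
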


\subsection{Proof of Lemma \ref{PT outside P}}

\subsubsection{Notations}
For any $l \geq 3$, let 
\begin{equation*}
C^l(\R/\Z \times \Sigma ;P) = \{H \in C^{l}(\R/\Z \times \Sigma) \mid dH_{(t,p)} = 0 \text{ for all $t \in \R/\Z$ and $p \in P$}\}
\end{equation*}
Let $C^l([0,1] \times \R/\Z \times \Sigma ;P)$ be the collection of $H \in C^{l}([0,1]^N \times \R/\Z \times \Sigma)$ such that $H^{\tau} \in C^l(\R/\Z \times \Sigma ;P)$ for each $\tau \in [0,1]^N$. This is a Banach subspace under the $C^l$-norm. 

\begin{claim}
\label{PT claim}
For each $l \geq 3$, there exists a generic set of $H \in  C^l([0,1]^{N} \times \R/\Z \times \Sigma ;P)$ such that 
\begin{equation*}
    \text{measure}(\{ \tau \in [0,1]^N \mid \text{every periodic orbit of $\phi^{H^{\tau}}$ in $\Sigma_P$ is non-degenerate} \}) = 1
\end{equation*} 
\end{claim}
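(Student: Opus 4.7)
The plan is to apply a parametric Sard--Smale argument in the style of \cite[Lemma 5.1]{Prasad}, with the additional care needed to respect the flatness condition $dH|_{\R/\Z \times P} = 0$. Since a Hamiltonian with the desired property simultaneously for all periods can be obtained as a countable intersection over $d \geq 1$ of the analogous sets for a single period, I would fix $d \geq 1$ and handle each period separately.

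For each such $d$, let $\cH = C^l([0,1]^N \times \R/\Z \times \Sigma; P)$ and introduce the universal moduli spaces
\begin{align*}
\M_d &= \{(H, \tau, x) \in \cH \times [0,1]^N \times \Sigma_P : (\phi^{H^\tau})^d(x) = x\}, \\
\M_d^{\deg} &= \{(H, \tau, x) \in \M_d : \det(D(\phi^{H^\tau})^d|_x - I) = 0\}.
\end{align*}
The main technical step is to prove that $\M_d$ is a $C^{l-2}$ Banach submanifold of codimension $2$, and $\M_d^{\deg}$ a submanifold of codimension $3$, in $\cH \times [0,1]^N \times \Sigma_P$. Both claims reduce to checking surjectivity of the linearized defining maps in the Hamiltonian direction. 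Concretely, I would exhibit, for any prescribed infinitesimal displacement of $(\phi^{H^\tau})^d(x)$ and of its linearized return map in $\Sp(2,\R)$, a variation $\delta H \in \cH$ realizing it, using bump Hamiltonians with linear and quadratic behavior in local Darboux coordinates supported in a small spacetime neighborhood of the orbit $\{x, \phi^{H^\tau}(x),\ldots,(\phi^{H^\tau})^{d-1}(x)\}$.

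With $\M_d^{\deg}$ a Banach submanifold in hand, I would apply the Sard--Smale theorem to the projection $\pi:\M_d^{\deg} \to \cH$, a Fredholm map of index $N-1$. Its regular values form a residual set $\mathcal{R}_d \subset \cH$; for each $H \in \mathcal{R}_d$, the fiber $\pi^{-1}(H) \subset [0,1]^N \times \Sigma_P$ is a smooth $(N-1)$-manifold (or empty), and its further projection to $[0,1]^N$ has Lebesgue measure zero by a dimension count. Setting $\mathcal{R} = \bigcap_{d \geq 1} \mathcal{R}_d$ --- residual as a countable intersection of residuals --- then gives the claim.

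The chief obstacle is the surjectivity needed to construct $\M_d$ and $\M_d^{\deg}$ while constrained to $\cH$: variations $\delta H$ with non-vanishing differential on $\R/\Z \times P$ are forbidden, so one must produce enough variations supported away from $P$. The saving observation is that every periodic orbit under consideration lies in $\Sigma_P$ --- a finite set disjoint from $P$ --- so there exists an open neighborhood of the orbit disjoint from $P$ in which I can freely insert bump Hamiltonians. Such variations automatically lie in $\cH$ and yield all the required surjective differentials, so the adaptation of Prasad's argument goes through.
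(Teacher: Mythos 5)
There is a genuine gap at the core transversality step. You build the universal moduli space $\M_d$ out of \emph{all} $d$-periodic points $(\phi^{H^\tau})^d(x)=x$ in $\Sigma_P$, including multiply covered ones, and claim that it and its degenerate locus are Banach submanifolds because localized bump Hamiltonians give surjective linearizations. This fails precisely at non-simple orbits: if $x$ has minimal period $d'$ strictly dividing $d$, a bump Hamiltonian supported in a small spacetime neighborhood of the orbit is traversed $d/d'$ times during one $d$-fold return, and the first-order displacement of $(\phi^{H^\tau})^d(x)$ it produces has the form $\bigl(\sum_{m=0}^{d/d'-1}\bigl(D(\phi^{H^{\tau}})^{d'}_{x}\bigr)^{m}\bigr)w$; if $D(\phi^{H^{\tau}})^{d'}_{x}$ is, say, a rotation by $2\pi d'/d$, this sum vanishes identically, while the $x$-variations alone achieve transversality to the diagonal only when $D(\phi^{H^{\tau}})^{d}_{x}-\mathrm{Id}$ is invertible, i.e.\ exactly when the orbit is already non-degenerate. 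So at the degenerate, multiply covered configurations — the ones your argument must control — the linearized defining map need not be surjective, $\M_d$ and $\M_d^{\mathrm{deg}}$ need not be manifolds, and the Sard--Smale step has nothing to apply to. (A secondary, fixable point: even where transversality holds, the eigenvalue-$1$ locus in $\Sp(2,\R)$ is not a submanifold — it is $\{\mathrm{Id}\}$ together with a two-dimensional stratum — so the "codimension $3$" claim requires a stratification, as in the paper's treatment of $\cZ$.)

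The paper (following Prasad) avoids this by parametrizing only \emph{simple} orbits: the universal space $\M^{l,d}$ lives over tuples of $d$ distinct points in ${\Sigma_P}^d\setminus\Delta^d$, where a bump near $x_1$ affects only one step of the orbit, so surjectivity of the linearization is immediate; degeneracy of every iterate of the simple orbit is then encoded by the single condition that the linearized return map $\rho(S,H^{\tau})$ have a root of unity as an eigenvalue, which cuts out a countable union of codimension-$\geq 1$ submanifolds $\M^{l,d}_{\text{bad}}\subset\M^{l,d}$, after which Sard--Smale applied to the projections to $C^l([0,1]^{N}\times\R/\Z\times\Sigma;P)$ gives the full-measure statement in $\tau$. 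Your remaining structure — working period by period with a countable intersection, placing perturbations away from $P$ so the flatness constraint is harmless, and the measure-zero projection of the $(N-1)$-dimensional bad fibers — is consistent with the paper's argument, but the passage through non-simple periodic points is a real obstruction, not a technicality, and must be replaced by the simple-orbit/root-of-unity formulation.
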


Lemma \ref{PT outside P} follows from Claim \ref{PT claim} by the same argument as in \cite[Section 5.1.2]{Prasad}, upon verifying that \(C^{\infty}([0,1]^N \times \R/\Z \times \Sigma ;P)\) is dense in \(C^l([0,1]^N \times \mathbb{R}/\mathbb{Z} \times \Sigma; P)\) in the \(C^l\)-topology.

\subsubsection{Proof of Claim \ref{PT claim}} 
Fix $l \geq 3$ and $d \geq 1$. Let $\Delta^d \subset {\Sigma_{P}}^d$ be the collections of tuples $(x_1,\dots,x_d)$ such that $x_i = x_j$ for some $i \neq j$. Let
\begin{align*}
    \Psi_d: C^l([0,1]^{N} \times \R/\Z \times \Sigma ;P) \times [0,1]^{N} \times ({\Sigma_{P}}^d \setminus \Delta^{d})  \to {\Sigma_{P}}^d \times {\Sigma_{P}}^d
\end{align*}
be defined by sending $(H,\tau,x_1,\dots,x_d)$ to $(x_1,\dots,x_d,\phi^{H^{\tau}}(x_d),\phi^{H^{\tau}}(x_1),\dots,\phi^{H^{\tau}}(x_{d-1}))$. 
Let $Z$ be the diagonal of ${\Sigma_{P}}^d \times {\Sigma_{P}}^d$, then $\M^{l,d} \defeq \Psi_d^{-1}(Z)$ is the collection of tuples $(H,\tau,S)$ such that $S = (x_1,\dots,x_d)$ is a simple periodic orbit of $\phi^{H^{\tau}}$ which lies in $\Sigma_P$. Let $\M^{l,d}_{bad} \subset \M^{l,d}$ be the collection of tuples $(H,\tau,S)$ such that the linearized return map 
\begin{equation*}
    \rho(S,H^{\tau}) \defeq D(\phi^{H^{\tau}})^{d}_{x_1} \in \Sp(T_{x_1}\Sigma,\omega)
\end{equation*}
at $x_1$ has a root of unity as an eigenvalue. Here, $\Sp(T_{x_1}\Sigma,\omega)$ is the group of linear symplectic automorphisms of $(T_{x_1}\Sigma,\omega)$. By repeating the arguments in \cite[Section 5.1.4 - 5.1.5]{Prasad}, one can show that
\begin{itemize}
    \item $\Psi_d$ is transverse to $Z$, $\M^{l,d}$ is a Banach manifold of class $C^{l-1}$, and the projection $\M^{l,d} \to C^l([0,1]^{N} \times \R/\Z \times \Sigma ;P)$ is a $C^{l-1}$ Fredholm map of index $N$. 
    \item $\M^{l,d}_{\text{bad}}$ is a countable union of $C^{l-2}$ Banach submanifolds of $\M^{l,d}$ of codimension at least 1. 
\end{itemize}
The claim then follows from the bullet points, as explained in \cite[Section 5.1.3]{Prasad}.

\subsection{Proof of lemma \ref{PT inside P}}

\subsubsection{}

It suffices to show that for each $l \geq 3$, there exists a generic set $\H^l_{\text{good}}(P)$
of $C^l([0,1]^{N} \times \R/\Z \times \Sigma ;P)$ such that for each $H \in \H^l_{\text{good}}(P)$, 
\begin{equation*}
    \text{measure}(\{ \tau \in [0,1]^N \mid \text{every periodic orbit of $\phi^{H^{\tau}}$ in $P$ is non-degenerate} \}) = 1
\end{equation*}

Let $S$ be a simple periodic orbit of $\phi$ in $P$. Let $\H^{l}_{\text{good}}(S)$ be the collection of $H \in C^l([0,1]^{N} \times \R/\Z \times \Sigma ;P)$ such that
\begin{equation*}
    \text{measure}(\{ \tau \in [0,1]^N \mid  \text{$S^k$ is a non-degenerate orbit of $\phi^{H^{\tau}}$ for all $k \geq 1$} \}) = 1
\end{equation*}
where $S^k$ is the $k$-th iterate of $S$. Then $\H^l_{\text{good}}(P)$ is the finite intersection
\begin{equation*}
    \H^l_{\text{good}}(P) = \bigcap_{\substack{S \subset P \\ \text{simple orbit of } \phi}} \H^{l}_{\text{good}}(S)
\end{equation*}
        It remains to show that $\H^{l}_{\text{good}}(S)$ is residual in $C^l([0,1]^{N} \times \R/\Z \times \Sigma ;P)$ for each $S$. 

\subsubsection{Proof that $\H^{l}_{\text{good}}(S)$ is residual}

Let $S = \{x_1,\dots,x_d\}$ be a simple periodic orbit of $\phi$ in $P$. Consider 
\begin{equation*}
    \rho_{S}: C^l([0,1]^{N} \times \R/\Z \times \Sigma ;P) \times [0,1]^N \to \text{Sp}(T_{x_1}\Sigma,\omega)
\end{equation*}
defined by sending $(H,\tau)$ to $\rho(S,H^{\tau})$. Let \(\cZ \subset \text{Sp}(T_{x_1}\Sigma, \omega)\) be the collection of linear symplectic automorphisms having a root of unity as an eigenvalue. Then \(\cZ\) is a union of submanifolds of \(\Sp(T_{x_1}\Sigma, \omega)\) of codimension at least 1. 

Let $\P_{\text{bad}}$ be the collection of ``bad pairs'' $(H,\tau) \in C^l([0,1]^{N} \times \R/\Z \times \Sigma ;P) \times [0,1]^N$ such that $S^k$ is a degenerate orbit of $\phi^{H^{\tau}}$ for some $k$. Then $\P_{\text{bad}} = \rho_{S}^{-1}(\cZ)$. We want to show that $\rho_{S}$ is transverse to $\cZ$, so that $\P_{\text{bad}}$ is a union of Banach submanifolds of $C^l([0,1]^{N} \times \R/\Z \times \Sigma ;P) \times [0,1]^N$ of codimension at least 1. For now, assume that this is true. Let 
\begin{align*}
    &\pi_{1}: C^l([0,1]^{N} \times \R/\Z \times \Sigma ;P) \times [0,1]^N \to  C^l([0,1]^{N} \times \R/\Z \times \Sigma ;P)\\
    &\pi_2: C^l([0,1]^{N} \times \R/\Z \times \Sigma ;P) \times [0,1]^N \to [0,1]^N
\end{align*}
be the canonical projections. By applying the Sard-Smale theorem to $\pi_{1}$ and $\pi_1|_{\P_{\text{bad}}}$, one can find a generic set $\G \subset C^l([0,1]^{N} \times \R/\Z \times \Sigma ;P)$, such that for each $H \in \G$,
\begin{itemize}
    \item $\pi_{1}^{-1}(H)$ is a submanifold of $C^l([0,1]^{N} \times \R/\Z \times \Sigma ;P) \times [0,1]^N$ of dimension $N$. 
    \item $\pi_{1}^{-1}(H) \cap \P_{\text{bad}}$ is a union of submanifolds of $\pi_{1}^{-1}(H)$ of codimension at least 1.
\end{itemize}
Fix $H \in \G$. By Sard's theorem, there exists a full measure subset $\tau(H) \subset [0,1]^N$ such that each $\tau \in \tau(H)$ is a regular value for both the maps $\pi_2|_{\pi_{1}^{-1}(H)}$ and $\pi_2|_{\pi_{1}^{-1}(H) \cap \P_{\text{bad}}}$. However, for such a $\tau$, we must have $\pi_{2}^{-1}(\tau) \cap \pi_{1}^{-1}(H) \cap \P_{\text{bad}} = \emptyset$, as it is a submanifold of $\pi_{1}^{-1}(H) \cap \P_{\text{bad}}$ of codimension $N$. It follows that $(H,\tau) \notin \P_{\text{bad}}$ for $H \in \G$ and $\tau \in \tau(H)$. By construction, $\G = \H^{l}_{\text{good}}(S)$. 

\subsubsection{Proof that $\rho_{S}$ is transverse to $\cZ$ \label{transversality construction}}

We will show that \(\rho_{S}\) is a submersion. Note that the Lie algebra of \(\Sp(T_{x_1}\Sigma)\) is \(\End_{0}(T_{x_1}\Sigma)\): the space of traceless linear endomorphisms of \(T_{x_1}(\Sigma)\). Hence, for \((H,\tau) \in  C^l([0,1]^N \times \R/\Z \times \Sigma ;P) \times [0,1]^N\),
\begin{equation*}
     T_{\rho_{S}(H,\tau)}\Sp(T_{x_1}\Sigma) = \{\rho_{S}(H,\tau) \circ M \mid M \in \End_0(T_{x_1}\Sigma)\}.
\end{equation*}
To prove that \(d\rho_{S}\) is a submersion at \((H,\tau)\), it suffices to find, for any \(M \in \End_{0}(T_{x_1}\Sigma)\), a path \(\gamma_{M}:\R \to C^l([0,1]^N \times \R/\Z \times \Sigma; P) \times [0,1]^N\) such that \(\gamma_M(0) = (H,\tau)\) and 
\begin{equation*}
    \frac{d}{dt} \Big|_{t = 0} \rho_S(\gamma_M(t)) = \rho_S(H,\tau) \circ M.
\end{equation*}
We may assume that \(\gamma_M(t)\) only varies the \(H\) factor and takes the form \(\gamma_M(t) = (H_{M}(t),\tau)\), where \({H_{M}(t)}^{\tau} = H^{\tau} \# tK\) for some \(K \in C^l(\R/\Z \times \Sigma; P)\). Furthermore, we assume \(K\) is supported in an open neighborhood around \(x_1\) that is disjoint from any other points in \(S\). A direct computation shows that
\begin{equation*}
   \rho_{S}(\gamma_M(t)) = \rho_S(H,\tau) \circ ({D\varphi_{K}^t})_{x_1} = \rho_S(H,\tau) \circ \exp(t \X_K) =  \rho_S(H,\tau) \circ \X_K
\end{equation*}
where
\begin{equation*}
    \X_K = \lim_{t \to 0}\frac{(D\varphi_K^{t})_{x_1} - \text{Id}}{t}
\end{equation*}
\( \) is the tangent vector at the identity of the one-parameter subgroup \(\{D\varphi_{K}^{t}\}_{t \in \R}\) in \(\Sp(T_{x_1}\Sigma, \omega)\). Hence, $\frac{d}{dt} \Big|_{t = 0} \rho_{S}(\gamma_M(t)) =  \rho_S(H,\tau) \circ \X_K$.  It remains to require that \(\X_K = M\). 

In local Darboux coordinates, assume that \(x_1 \sim (0,0)\) and that \(K(t, x,y) = ax^2 + bxy + cy^2\), where \(a, b, c \in \R\). Then, $\X_K = \begin{pmatrix}
    b & 2c \\
    -2a & -b
\end{pmatrix}$. Thus, the equation \(\X_K = M\) has a solution for any \(M \in \End_{0}(T_{x_1}\Sigma)\) (identified with $\End_{0}(\R^2)$ via the Darboux chart). 

\subsection{Proof of Lemma 5.2}
The proof follows the same construction as in the proof of Lemma \ref{PT inside P}. For any \(l \geq 3\), let \(C^l(\R/\Z \times \Sigma; P) = \{H \in C^{l}(\R/\Z \times \Sigma) \mid dH_{(t,p)} = 0 \text{ for all } t \in \R/\Z \text{ and } p \in P\}\). We want to show that there exists an open and dense subset of \(H \in C^{l}(\R/\Z \times \Sigma; P)\) such that \(S_1, \dots, S_k\) are non-degenerate periodic orbits of \(\Phi^H\). For \(i \in \{1, \dots, k\}\), let
\begin{equation*}
    \H_{\text{good}}(S_i) = \{H \in C^l(\R/\Z \times \Sigma; P) \mid S_i \text{ is a non-degenerate periodic orbit of } \Phi^H \}
\end{equation*}
Then \(\bigcap_{i=1}^{k} \H_{\text{good}}(S_i)\) is the desired set. To show it is open and dense, it suffices to show that each \(\H_{\text{good}}(S_i)\) is open and dense.

For each \(i \in \{1, \dots, k\}\), consider the map
\[
\rho_{S_i}: C^l(\R/\Z \times \Sigma; P) \to \Sp(T_{x_1}\Sigma, \omega)
\]
sending \(H\) to \(D(\Phi^H)^{d}_{x_1}\). Let \(\cZ \subset \Sp(T_{x_1}\Sigma, \omega)\) be the collection of linear symplectic automorphisms having 1 as an eigenvalue. Then \(\cZ = \cZ_{I} \cup \cZ_{II}\), where \(\cZ_{I} = \{\text{Id}\}\) and \(\cZ_{II}\) is a 2-dimensional submanifold of \(\Sp(T_{x_1}\Sigma, \omega)\). Note that $\H_{\text{good}}(S_i) = C^l(\R/\Z \times \Sigma; P) \setminus \rho_{S_i}^{-1}(\cZ)$. Since \(\cZ\) is closed, \(\H_{\text{good}}(S_i)\) is open. Furthermore, the same construction as in \ref{transversality construction} shows that \(\rho_{S_i}\) is a submersion. This implies the complement of \(\H_{\text{good}}(S_i)\) is a union of Banach submanifolds of \(C^l(\R/\Z \times \Sigma; P)\) of codimension at least 1. Thus, \(\H_{\text{good}}(S_i)\) is dense, as desired. 

\begin{section}{Proof of Theorem \ref{thm2} \label{Section 5}}
\subsection{}

Let \(C_{c}^{\infty}(S^2;P)\) be the space of smooth functions on \(S^2\) that are constant near \(P\). The following two propositions are adapted from \cite{Prasad}. Proposition \ref{NE discrete} is a corollary of Proposition \ref{NE ctns}.
\begin{prop}
\label{NE ctns}
Let $\Phi \in \Homeo(S^2,\omega)$ be such that $\Phi(P) = P$. Fix $\varepsilon > 0$ and a finite subset $\{F_i\}_{i=1}^{N}$ of $C^{\infty}(\R/\Z \times S^2 ;P)$. For any non-empty open set $U \subset C^{\infty}(\R/\Z \times S^2 ;P)$, there exist $H \in U$ and an orbit set $\O \in \P_{\R}(\Phi^H)$ such that 
\begin{equation*}
    \left \lvert \frac{1}{\lvert \mathcal{O} \rvert} \int_{\Theta_{\O}}F_i \,dt - \int_{0}^{1}\int_{S^2}F_i \,\omega \wedge dt \right \rvert < \varepsilon
    \end{equation*}
for any $i \in \{1,\dots, N\}$. Here, $\int_{\Theta_{\O}}F \,dt = \O(\tilde{F})$ where $\tilde{F}(\cdot) = \int_{0}^{1}F(t,\cdot) dt$.
\end{prop}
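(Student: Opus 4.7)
My plan mirrors the strategy of \cite[Proposition 4.1]{Prasad}, substituting the PFH Weyl law variant, Theorem \ref{double limit Weyl law}, for the Weyl law (\ref{CGPZ weyl law}) used there. The first step is a reduction: since $S^2$ has trivial mapping class group and $\Ham(S^2,\omega) = \textrm{Diff}_0(S^2,\omega)$, Lemma \ref{fine approximation} gives $\Phi = \lim_{C^0} \varphi_{K_n}^1$ for some sequence $\{K_n\} \subset C^{\infty}(\R/\Z \times S^2)$. Any nontrivial permutation of $P$ induced by $\Phi$ may be absorbed by pre-composing with a fixed area-preserving diffeomorphism realizing that permutation, so without loss of generality each $K_n \in C^{\infty}(\R/\Z \times S^2; P)$ and $\varphi_{K_n}^1$ fixes $P$ pointwise.

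Next, fix $H_0 \in U$ and $\delta > 0$ small enough that $H^{\tau} := H_0 + \sum_{i=1}^N \tau_i F_i$ lies in $U$ for all $\tau \in B := [0,\delta]^N$. Define
\[
g_{d,n}(\tau) \;=\; \frac{c_d(K_n \# H^{\tau}) \;+\; \int_{d\cdot \gamma^{K_n}} H^{\tau}\, dt}{d}.
\]
The Hofer-Lipschitz inequality (\ref{hofer-lipschitz}), applied with base map $\varphi_{K_n}^1$ and deformations $H^{\tau}, H^{\tau'}$, shows that $g_{d,n}$ is Lipschitz in $\tau$ with constant $\sum_i \|F_i\|_{C^0}$, uniformly in $(d,n)$. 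By Lemma \ref{parametric transversality}, after perturbing $H_0$ generically inside $U$, for almost every $\tau \in B$ the map $\varphi_{K_n \# H^{\tau}}^1$ is non-degenerate; combining this with Rademacher's theorem and the standard action-derivative formula for PFH spectral invariants, for a.e.\ such $\tau$ the spectral invariant is realized by a single PFH generator corresponding to an integral orbit set $\O_{d,n,\tau}$ of $\varphi_{K_n \# H^{\tau}}^1$ with $|\O_{d,n,\tau}| = d$, and
\[
\partial_{\tau_i} g_{d,n}(\tau) \;=\; \frac{\int_{\Theta_{\O_{d,n,\tau}}} F_i\, dt}{d}.
\]

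Theorem \ref{double limit Weyl law} then yields $\lim_{d\to\infty} \limsup_{n\to\infty} [g_{d,n}(\tau) - g_{d,n}(0)] = G(\tau) := \sum_i \tau_i \int_0^1 \int_{S^2} F_i\, \omega \wedge dt$ pointwise in $\tau$. The uniform Lipschitz bound together with Arzelà–Ascoli lets me pass to a diagonal subsequence along which $g_{d,n} - g_{d,n}(0) \to G$ uniformly on $B$. The final step is to locate $\tau^* \in B$, a point of differentiability of $g_{d,n}$ at which $\varphi_{K_n \# H^{\tau^*}}^1$ is non-degenerate, such that $|\partial_{\tau_i} g_{d,n}(\tau^*) - \partial_{\tau_i} G| < \varepsilon$ for every $i$ simultaneously; by the derivative formula this produces a near-equidistributed integral orbit set $\O_{d,n,\tau^*}$ of $\varphi_{K_n \# H^{\tau^*}}^1$, and since $|\O_{d,n,\tau^*}| = d$ is bounded, a Hausdorff-subsequential limit as $n \to \infty$ yields the desired $\O \in \P_{\R}(\Phi^{H^{\tau^*}})$ with $H := H^{\tau^*} \in U$.

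The main obstacle is the final gradient-matching step: extracting at a single $\tau^* \in B$ a simultaneous near-match of all $N$ partial derivatives, from $C^0$-convergence of Lipschitz functions to a smooth limit. This is not valid in general (consider $f_k(x) = \sin(kx)/k$), so structure beyond mere uniform convergence must be used. I plan to handle this by partitioning $B$ into sub-cubes of side length $h$ calibrated to the $C^0$-convergence rate of $g_{d,n} - G$: on each sub-cube, Fubini and the fundamental theorem of calculus bound the sub-cube average of $\partial_{\tau_i}(g_{d,n} - G)$ by the $C^0$-distance divided by $h$, and then the uniform Lipschitz bound plus a Chebyshev-type argument locates a differentiability point within the sub-cube whose gradient lies close to its sub-cube average. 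I expect the triangle-type inequality (\ref{triangle inequality}) to play a crucial role here, as it provides an approximate subadditivity of $g_{d,n}$ in $\tau$ that is unavailable in the higher-genus setting and lets one bootstrap from one-parameter mean-value bounds to the multi-parameter case.
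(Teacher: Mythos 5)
Your overall architecture matches the paper's: a multi-parameter family $F^{\tau}\in U$ built from the $F_i$, uniform Lipschitz control of the normalized spectral invariants via Hofer--Lipschitz, pointwise convergence of the error from Theorem \ref{double limit Weyl law} upgraded to uniform convergence by equi-Lipschitzness, non-degeneracy for a.e.\ $\tau$ from Lemma \ref{parametric transversality}, the derivative-of-spectral-invariant formula, and finally a $C^0$-limit of orbit sets of $\phi_n^{F^{\tau_n}}$ (of fixed cardinality $d$) as $n\to\infty$ to land on an orbit set of $\Phi^{H}$. The genuine gap is exactly at the step you flag as the ``main obstacle,'' and your proposed repair does not work. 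You want a single differentiability point $\tau^*$ at which the gradient of the Lipschitz function $g_{d,n}-G$ is close to its sub-cube average, located by a ``Chebyshev-type argument.'' Smallness of the \emph{average} of $\partial_{\tau_i}(g_{d,n}-G)$ over a sub-cube gives no pointwise (nor $L^1$) control because of cancellation: a sawtooth $f(x)=\varepsilon\,\mathrm{dist}(x/\varepsilon,\mathbb{Z})$ is uniformly small with derivative $\pm1$ almost everywhere, so no point has gradient near the average. Nor does the triangle-type inequality (\ref{triangle inequality}) help here; in the paper it is used only to interchange the limits in the proof of Theorem \ref{double limit Weyl law}, not in the equidistribution argument.

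The paper (following the formal argument of \cite[Section 4.4.3]{Prasad}) avoids this entirely by \emph{not} asking for a single point with the right gradient: for a.e.\ $\tau$ in a sub-cube the gradient is the normalized action vector of one of finitely many \emph{integral} orbit sets of cardinality $d$, so the sub-cube average of the gradient --- which Fubini and the fundamental theorem of calculus do show is close to $\nabla G$ --- is a convex combination of these action vectors. One therefore outputs a convex combination $\mathcal{O}_n=\sum_j a_{n,j}\mathcal{O}_{n,j}$ with $a_{n,j}>0$, $\sum_j a_{n,j}=1$, $\mathcal{O}_{n,j}\in\mathcal{P}_{\mathbb{Z}}(\phi_n^{F^{\tau_n}})$, $|\mathcal{O}_{n,j}|=d$; this is precisely why the proposition is stated for $\mathcal{P}_{\mathbb{R}}(\Phi^H)$ rather than for integral orbit sets, and why the paper's proof records the convex-combination structure before passing to the $n\to\infty$ limit (taking subsequences so that $\tau_n\to\tau$, the coefficients converge, and the constituent orbits, having fixed cardinalities, converge to orbits of $\Phi^{F^{\tau}}$). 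If you replace your gradient-matching step by this averaging/convex-combination argument (and, for fixed large $d$, use the equi-Lipschitz bound plus a finite cover of the cube to get $\|\overline{e}_{d,n}\|_{C^0}<\varepsilon$ for \emph{all} $n\ge n_0$, rather than along a diagonal subsequence, since the inner limit in the Weyl law is only a $\limsup$), your proof becomes essentially the paper's.
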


\begin{prop}
\label{NE discrete}
Let $\Phi \in \Homeo(S^2,\omega)$ be such that $\Phi(P) = P$. Fix $\varepsilon > 0$ and a finite subset $\{f_i\}_{i=1}^{N}$ of $C_{c}^{\infty}(S^2;P)$. For any nonempty open set $U \subset C^{\infty}(\R/\Z \times S^2 ; P)$, there exist $H \in U$ and an orbit set $\O \in \P_{\R}(\Phi^H)$ such that 
\begin{equation*}
    \left \lvert \frac{1}{\lvert \mathcal{O} \rvert} \O(f_i) - \int_{S^2} f_i \, \omega \right \rvert < \varepsilon
    \end{equation*}
for any $i \in \{1,\dots, N\}$. 
\end{prop}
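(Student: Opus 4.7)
The plan is to deduce Proposition \ref{NE discrete} directly from Proposition \ref{NE ctns} by promoting each test function $f_i$ to a time-independent Hamiltonian.

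First I would set $F_i(t,x) \defeq f_i(x)$ for each $i \in \{1, \dots, N\}$. By hypothesis $f_i$ is constant on some neighborhood of each puncture, so $df_i$ vanishes identically on that neighborhood; in particular, $dF_{i,(t,p)} = 0$ for all $t \in \R/\Z$ and $p \in P$, which means $F_i$ belongs to $C^{\infty}(\R/\Z \times S^2;P)$. Next, I would invoke Proposition \ref{NE ctns} with the same $\varepsilon$, the same open set $U$, and the finite family $\{F_i\}_{i=1}^{N}$, obtaining $H \in U$ and an orbit set $\mathcal{O} \in \P_{\R}(\Phi^H)$ satisfying
\[
    \left\lvert \frac{1}{\lvert \mathcal{O} \rvert} \int_{\Theta_{\mathcal{O}}} F_i\, dt - \int_0^1 \int_{S^2} F_i\, \omega \wedge dt \right\rvert < \varepsilon
\]
for every $i$.

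Finally, I would convert this continuous estimate into the discrete one by exploiting the time-independence of $F_i$. Since $\tilde{F}_i(x) = \int_0^1 F_i(t,x)\, dt = f_i(x)$, we have $\int_{\Theta_{\mathcal{O}}} F_i\, dt = \mathcal{O}(\tilde{F}_i) = \mathcal{O}(f_i)$, and likewise $\int_0^1 \int_{S^2} F_i \, \omega \wedge dt = \int_{S^2} f_i \, \omega$. Plugging these two identities into the bound above yields exactly the inequality demanded by Proposition \ref{NE discrete}. There is essentially no obstacle to this reduction; all of the genuine content is buried in Proposition \ref{NE ctns}, and the role of the hypothesis $f_i \in C_c^{\infty}(S^2;P)$ (smooth functions constant near $P$) is precisely to guarantee that the associated time-independent Hamiltonians sit inside the ambient space $C^{\infty}(\R/\Z \times S^2;P)$ required by the continuous version.
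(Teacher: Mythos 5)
Your proposal is correct and is precisely the reduction the paper intends: the paper simply asserts that Proposition \ref{NE discrete} is a corollary of Proposition \ref{NE ctns}, and the time-independent-Hamiltonian lift $F_i(t,x)=f_i(x)$, which lands in $C^{\infty}(\R/\Z\times S^2;P)$ because $f_i$ is constant near $P$, together with the identities $\tilde F_i=f_i$ and $\int_0^1\int_{S^2}F_i\,\omega\wedge dt=\int_{S^2}f_i\,\omega$, is the obvious and correct way to carry it out.
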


\subsection{Proof of Proposition \ref{NE ctns}}
Let $\{\phi_n\}_{n \geq 1}$ be a sequence in $\Diff(S^2,\omega)$ such that $\phi_n(P) = P$ and  $\lim_{C^0}\phi_n = \Phi$. By taking $C^{\infty}$-small perturbations, we may assume that the $\phi_n$'s are non-degenerate. Choose \( H_n \in C^{\infty}(\R/\Z \times S^2) \) such that \( \phi_n = \varphi_{H_n}^{1} \).

\subsubsection{Multi-parameter Hamiltonians} As in \cite[Section 4.4.2]{Prasad}, we construct a multi-parameter Hamiltonian $F \in C^{\infty}([0,1]^N \times \R/\Z \times S^2 ;P)$ from $F_1,\dots,F_N$, such that $F^{\tau} \in U$ for each $\tau \in [0,1]^N$. By taking a $C^{\infty}$-small perturbation of $F$, we may assume that
\begin{equation*}
   \text{measure}(\{ \tau \in [0,1]^{N} \mid \text{\(\phi_{n}^{F^{\tau}}\) is non-degenerate for all $n$}\}) = 1
\end{equation*}
This follows from Lemma \ref{parametric transversality}.

\subsubsection{Application of Theorem \ref{double limit Weyl law}}

For each $(d,n) \in \N^{*} \times \N^{*}$, define the error function $e_{d, n}: C^{\infty}(\R/\Z \times S^2) \to \R$ by 
\begin{equation*}
    e_{d,n}(K) =  \int_{0}^{1}\int_{S^2}K \, \omega \wedge dt - \frac{c_d(H_n \# K) - c_d(H_n) + \int_{d \cdot \gamma_{H_n}}K \,dt}{d}
\end{equation*}
For $\tau \in [0,1]^{N}$, let
\begin{equation*}
    \overline{e}_{d,n}(\tau) = e_{d,n}(F^{\tau}) \text{ and } \overline{e}_{d}(\tau) = \limsup_{n \to \infty}\overline{e}_{d,n}(\tau)
\end{equation*}
By (\ref{the weyl law}), $\overline{e}_d \to 0$ pointwise on $[0,1]^{N}$. Moreover, by (\cite[Lemma 4.3]{Prasad}), for any $\tau, \tau' \in [0,1]^{N}$, we have
\begin{equation}
\label{lip condition}
    \lv \overline{e}_{d,n}(\tau) - \overline{e}_{d,n}(\tau') \rv \leq 2\lv \lv F^{\tau} -  F^{\tau'} \rv \rv_{C^0} \leq C \lv \lv \tau - \tau' \rv \rv
\end{equation}
where \( C > 0 \) is a Lipschitz constant that depends only on \( F \). Hence, \( | \overline{e}_{d}(\tau) - \overline{e}_{d}(\tau') | \leq C \| \tau - \tau' \| \). Since \( \{\overline{e}_d\}_{d \geq 1} \) is uniformly Lipschitz,  it converges to zero uniformly. 

\subsubsection{Uniform control of $e_{d,n}$}

Fix a sufficiently large \( d \) such that \( \| \overline{e}_{d} \|_{C^0} < \varepsilon \). Then, for each \( \tau \in [0,1]^{N} \), there is an index \( n_{\tau} \) such that \( \lv \overline{e}_{d,n}(\tau) \rv < \varepsilon/2 \) whenever \( n \geq n_{\tau} \). By (\ref{lip condition}), one can choose \( \delta > 0 \) such that for any \( (d,n) \in \N^{*} \times \N^{*}\), \( | \overline{e}_{d,n}(\tau) - \overline{e}_{d,n}(\tau') | < \varepsilon/2 \) whenever \( \| \tau - \tau' \| < \delta \). Select a finite subset \( \{\tau_1,\ldots,\tau_k\} \subset [0,1]^{N} \) such that the balls \( \{B_{\delta}(\tau_i)\} \) cover \( [0,1]^{N} \). Let \( n_0 = \max_{i \in \{1,\ldots, k\}} n_{\tau_i} \). Let \( \tau \in [0,1]^{N} \) and suppose \( \tau \in B_{\delta}(\tau_i) \). For any \( n \geq n_0 \), we have \( | \overline{e}_{d,n}(\tau) | < \varepsilon \) by the triangle inequality. Hence, \( \| \overline{e}_{d,n} \|_{C^0} < \varepsilon \) for all \( n \geq n_0 \). 

\subsubsection{A formal argument from \cite{Prasad}}

Apply the argument in \cite[Section 4.4.3]{Prasad} to all \( n \geq n_0 \) (the assumption on \( F \) in Section 5.2.1 is required). This produces a sequence \(\{\tau_n\}_{n \geq n_0}\) in \( [0,1]^{N} \) with corresponding orbit sets \( \mathcal{O}_n \in \mathcal{P}_{\mathbb{R}}(\phi_n^{F^{\tau_n}}) \), each with cardinality $d$, such that
\begin{equation}
\label{key equation}
\left \lvert \frac{1}{\lvert \mathcal{O}_n \rvert}\int_{\Theta_{\mathcal{O}_n}}F_i \, dt - \int_{0}^{1} \int_{S^2} F_i \, \omega \wedge dt \right \rvert  < \varepsilon
\end{equation}
for \( i \in \{1,\dots, N\} \). Furthermore, each \( \mathcal{O}_n \) is a convex combination of integral orbit sets. That is, \( \mathcal{O}_n = \sum_{j=1}^{N} a_{n,j} \mathcal{O}_{n,j} \), where the coefficients \( a_{n,j} \) are positive real numbers that satisfy \( \sum_{j=1}^{N} a_{n,j} = 1 \), and \( \mathcal{O}_{n,j} \in \mathcal{P}_{\mathbb{Z}}(\phi_{n}^{F^{\tau_n}}) \) with \( \lvert \mathcal{O}_{n,j} \rvert = d \).

\subsubsection{Convergence of orbit sets}
We show that a certain subsequence of $\{\O_n\}_{n \geq n_0}$ converges. By compactness, assume that $\tau_n \to \tau$. For each $j \in \{1,\dots,N\}$, write 
\begin{equation*}
    \O_{n,j} = \sum_{k=1}^{I(n,j)} b_{n,j,k} S_{n,j,k}
\end{equation*}
where $b_{n,j,k} \in \Z_{> 0}$ and $S_{n,j,k} \in \P(\phi_{n}^{F^{\tau_n}})$. Since \( \lvert \mathcal{O}_{n,j} \rvert \equiv d \), we may assume that \( \{I(n,j)\}_{n \geq 1}\) is a constant integral sequence taking value $I(j)$. Likewise, for each $k \in \{1,\dots, I(j)\}$, we assume that $\{b_{n,j,k}\}_{n \geq 1}$ and $\{\lvert S_{n,j,k} \rvert \}_{n \geq 1}$ are constant integral sequences taking values $b_{j,k}$ and $\lvert S_{j,k} \rvert$.

Since $\lim_{C^0}\phi_{n}^{F^{\tau_n}} = \phi^{F^{\tau}}$ and the orbits $\{S_{n,j,k}\}_{n \geq 1}$ have the same cardinality, it subsequentially converges to \( S_{j,k} \in \mathcal{P}(\Phi^{F^\tau})\). Finally, for each fixed $j$, a subsequence of \( a_{n,j} \) converges to \( a_{j} \), and the \( a_j \)'s satisfy \( \sum_{j=1}^{N} a_j = 1 \). Hence, a subsequential limit of $\{\O_n\}_{n \geq n_0}$ is
\begin{equation*}
\label{orbit set}
    \O  = \sum_{j=1}^{N}a_j\left(\sum_{k=1}^{I(j)} b_{j,k} S_{j,k}\right) \in \P_{\R}(\Phi^{F^\tau})
\end{equation*}
and $\lvert \O \rvert = d$. By (\ref{key equation}), we have
\begin{equation*}
    \left \lvert \frac{1}{\lvert \mathcal{O} \rvert} \int_{\Theta_{\O}}F_i \,dt - \int_{0}^{1}\int_{S^2}F_i \,\omega \wedge dt \right \rvert < \varepsilon
    \end{equation*}
for each $i \in \{1,\dots, N\}$. This finishes the proof.

\subsection{Proof of Theorem \ref{thm2}} 

Write \(S = S^2_{P}\). Since the restriction map \(C^{\infty}(S^2, S^2) \to C^{\infty}(S, S^2)\) is continuous with respect to the \(C^{\infty}_{\text{loc}}\)-topology, it suffices to prove the following claim.

\begin{claim}
\label{final claim}
Let $\phi \in \Diff(S,\omega_0)$. For $C^{\infty}$-generic $H \in C^{\infty}(\R/\Z \times S^2 ; P)$, $\phi \circ (\varphi_H^{1})|_{S}$ has an equidistributed sequence of orbit sets in $S$. 
\end{claim}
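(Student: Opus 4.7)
The plan is a Baire category argument in $C^\infty(\R/\Z \times S^2; P)$, with density supplied by Proposition \ref{NE discrete} and openness secured by Lemma \ref{non-degeneracy}. Extend $\phi$ to $\Phi \in \Homeo(S^2,\omega)$ with $\Phi(P)=P$; since any $H \in C^\infty(\R/\Z \times S^2;P)$ has $\varphi_H^t$ fixing $P$ pointwise, $\Phi^H$ restricts on $S$ to $\phi \circ \varphi_H^1|_S$, and the periodic orbits of the restriction are exactly those of $\Phi^H$ lying in $S$. Fix a countable $C^0$-dense family $\{f_i\}_{i \geq 1} \subset C_0^\infty(\Sigma_P)$ and cutoff functions $\chi_j \in C_0^\infty(\Sigma_P)$ with $0 \leq \chi_j \leq 1$ and $\int_{S^2}\chi_j\,\omega \to 1$. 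For each $N \geq 1$, let $\mathcal{V}_N$ be the set of $H$ admitting an orbit set $\mathcal{O} \in \mathcal{P}_\R(\phi \circ \varphi_H^1|_S)$ with $|\mathcal{O}| > N$, all orbits non-degenerate, and $|\mathcal{O}(f_i)/|\mathcal{O}| - \int f_i\,\omega| < 1/N$ for $i=1,\dots,N$. Any $H \in \bigcap_N \mathcal{V}_N$ then produces a sequence of orbit sets equidistributing against every $f \in C_0^\infty(\Sigma_P)$, via the uniform bound $|\mathcal{O}(f)/|\mathcal{O}|| \leq \|f\|_{C^0}$ and $C^0$-density of $\{f_i\}$. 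Thus it suffices to show that each $\mathcal{V}_N$ is open and dense.

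Openness is the standard persistence statement: if $H \in \mathcal{V}_N$ with witness $\mathcal{O} = \sum a_k S_k$, non-degeneracy lets each $S_k$ continue to a unique nearby non-degenerate orbit $S_k'$ of $\phi \circ \varphi_{H'}^1|_S$ for $H'$ close to $H$, and $\mathcal{O}' = \sum a_k S_k'$ has the same cardinality with evaluations $\mathcal{O}'(f_i) \to \mathcal{O}(f_i)$, so the strict inequalities survive. For density, given $H_0$ and a neighborhood $U$, apply Proposition \ref{NE discrete} to $\Phi$ with test family $\{f_1,\dots,f_N,\chi_j\}$ (choosing $j$ so that $1 - \int \chi_j\,\omega$ is much smaller than $1/N$) and tolerance $\varepsilon \ll 1/N$. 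This produces $H_1 \in U$ and $\mathcal{O}_* \in \mathcal{P}_\R(\Phi^{H_1})$; the PFH-based proof of Proposition \ref{NE ctns} yields orbit sets of arbitrarily large cardinality, so we may arrange $|\mathcal{O}_*| > 2N$. Split $\mathcal{O}_* = \mathcal{O}_*^S + \mathcal{O}_*^P$ according to whether each simple orbit lies in $S$ or in $P$. Every $f_i$ and $\chi_j$ vanishes near $P$, so $\mathcal{O}_*^P$ contributes zero to all evaluations; the $\chi_j$-approximation then forces $|\mathcal{O}_*^S|/|\mathcal{O}_*| \geq \int \chi_j\,\omega - \varepsilon$, from which a short estimate yields $|\mathcal{O}_*^S(f_i)/|\mathcal{O}_*^S| - \int f_i\,\omega| < 1/N$ and $|\mathcal{O}_*^S| > N$. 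Finally, apply Lemma \ref{non-degeneracy} to the simple orbits in $\mathcal{O}_*^S$ (all of which lie in $\Sigma_P$) to find a $C^\infty$-small $K$ with $H_1 \# K \in U$ such that those orbits persist as non-degenerate periodic orbits of $\Phi^{H_1 \# K}$; then $H = H_1 \# K$ lies in $\mathcal{V}_N \cap U$.

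The main obstacle is translating the PFH-produced orbit set on the closed surface $S^2$ into a useful orbit set for the restricted dynamics on $S$: orbits sitting at the punctures contribute to $|\mathcal{O}_*|$ but nothing to any evaluation against a function in $C_0^\infty(\Sigma_P)$, and could a priori carry a non-negligible fraction of the total mass. The cutoff-function device resolves this by using the approximation for $\chi_j$ to bound the mass at $P$. A secondary, milder point is that the non-degeneracy perturbation from Lemma \ref{non-degeneracy} must be executed without destroying the selected orbit set; this succeeds because that lemma's construction is designed to fix the prescribed orbits pointwise, so they automatically survive as orbits of the perturbed map.
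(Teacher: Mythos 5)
Your proposal is correct and follows essentially the same route as the paper: a Baire category argument whose open--dense sets are defined by nearly-equidistributed, non-degenerate orbit sets, with density coming from Proposition \ref{NE discrete} together with the cutoff-function trick to bound the mass of orbits sitting at $P$, openness from persistence of non-degenerate orbits, and Lemma \ref{non-degeneracy} (whose perturbations fix the prescribed orbits) supplying the non-degeneracy. The only real deviation is your extra requirement $\lvert \mathcal{O} \rvert > N$, which forces you to invoke the large-cardinality feature of the proof of Proposition \ref{NE ctns} rather than its statement; this is harmless but unnecessary, since equidistribution in the sense of (\ref{equidistribution}) does not need the cardinalities to grow, and the paper dispenses with it.
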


\subsubsection{Notation} 
Let $\Phi \in \Homeo(S^2,\omega)$ be the extension of $\phi$ with $\Phi(P) = P$.   If \(\O = \sum_{k}a_kS_k\) is an orbit set of \(\Phi\), define  
\begin{equation*}
    \mathcal{O}^{S} \coloneqq \sum_{k: S_k \subset S} a_k S_k \in \mathcal{P}_{\mathbb{R}}(\Phi|_{S}),
\end{equation*}
which is obtained by deleting the \( S_k \)'s that are subsets of \(P\).

\subsubsection{Set-up}
Let $\{f_i\}_{i \geq 1}$ be a countable, $C^0$-dense subset of $C_{0}^{\infty}(S^2;P)$. Let \(\{\varepsilon_N\}_{N \geq 1}\) be a sequence of positive real numbers with $\lim_{N \to \infty}\varepsilon_N = 0$. For each \(N \geq 1\), choose \(\varepsilon_{N}' \in (0,\frac{1}{4})\) such that
\begin{equation}
\label{epsilon assumption}
    \left (4\max_{i \in \{1,\dots,N\}}\lVert f_i \rVert_{C^0} + 1 \right) \cdot \frac{{\varepsilon_{N}}'}{1 - 4{\varepsilon_{N}}'} < \varepsilon_N
\end{equation}
In addition, choose $\chi_N \in C_{0}^{\infty}(S^2;P)$ such that $0 \leq \chi_N \leq 1$ and $\int_{S^2} (1 - \chi_N) \,\omega < {\varepsilon_{N}}'$. 

\subsubsection{}

For each $N \geq 1$, let $ \H(N)$ be the collection of $H \in C^{\infty}(\R/\Z \times S^2 ;P)$ such that there exists $\O \in \P_{\R}(\Phi^H)$ such that 
\begin{itemize}
        \item Every simple orbit in $\O^{S}$ is non-degenerate.
    \item For each $i \in \{1,\dots,N + 2\}$
\begin{equation}
\label{estimate}
\left \lvert \frac{1}{\lvert \mathcal{O} \rvert} \O(g_i) - \int_{S^2} g_i \, \omega \right \rvert \leq {\varepsilon_{N}}' 
\end{equation}
where $g_i = f_i$ for $i \in \{1,\dots,N\}$, $g_{N+1} = \chi_N$, $g_{N+2} = 1 - \chi_N$. 
\end{itemize}

Lemma \ref{non-degeneracy} and Proposition \ref{NE discrete} imply that $\H(N)$ is a dense in $C^{\infty}(\R/\Z \times S^2 ;P)$. Since each simple orbit in $\O^{S}$ is non-degenerate, $\H(N)$ is also open.  

\subsubsection{$\O^{S}$ is nearly equidistributed}
Fix \(N \geq 1\) and let \(H \in \H(N)\). Let $\O \in \P_{\R}(\Phi^H)$ such that $\O^{S}$ is non-degenerate and satisfies (\ref{estimate}).  By (\ref{estimate}) and the triangle inequality, we have 
\begin{align}
\label{rough estimate}
    \left \lvert \frac{1}{\lvert \O \rvert} \O^S(f_i) - \frac{\lvert \O^S \rvert}{\lvert \O \rvert}  \int_{S^2} f_i \,\omega  \right \rvert 
    &\leq  \left \lvert \frac{1}{\lv \O \rv}{\O(f_i)} - \int_{S^2} f_i \,\omega \right \rvert + \left \lvert \left(1 - \frac{\lvert \O^S \rvert}{\lvert \O \rvert} \right) \int_{S^2} f_i \, \omega \right \rvert \\
    &\leq {\varepsilon_{N}}' + \max_{i \in \{1,\dots,N\}}\lv \lv f_i \rv \rv_{C^0} \cdot \left \lvert \frac{\lvert \O^S \rvert}{\lvert \O \rvert} - 1 \right \rvert \nonumber
\end{align}
for $i \in \{1,\dots,N\}$. We now estimate the last term. Note that
\begin{equation*}
    \lvert \O^S \rvert = \O^S(1) = \O(\chi_N) + \O(1 - \chi_N)
\end{equation*}
Hence, by the triangle inequality, 
\begin{align}
\label{ratio estimate}
     \left \lvert \frac{\lvert \O^S \rvert}{\lvert \O \rvert} - 1 \right \rvert  & \leq \left \lvert \frac{\O^S(\chi_N)}{\lvert \O \rvert} - 1 \right \rvert + \frac{\O^S(1 - \chi_N)}{\lv \O \rv}  \nonumber \\ 
     &= \left \lvert \frac{\O^S(\chi_N)}{\lvert \O \rvert} - \int_{S^2}\chi_N \,\omega + \int_{S^2}(\chi_N - 1) \,\omega \right \rvert + \frac{\O^S(1 - \chi_N)}{\lv \O \rv} \nonumber \\
    & \leq \left \lvert \frac{\O(\chi_N)}{\lvert \O \rvert} - \int_{S^2} \chi_N \, \omega \right \rvert + \left \lvert \frac{\O(1 - \chi_N)}{\O} - \int_{S^2}(1- \chi_N) \, \omega \right \rvert + 2 \int_{S^2} (1 - \chi_N) \, \omega \nonumber \\ 
    & \leq 4{\varepsilon_{N}}' 
\end{align}
In the third step, we use that $0 \leq 1 - \chi_N \leq 1$. In the last step, we use (\ref{estimate}) for $g_{N+1} = \chi_N$ and $g_{N+2} = 1 - \chi_N$.  Since $\lv \O^{S} \rv \leq \lv \O \rv$, it follows from (\ref{ratio estimate}) that
\begin{equation}
    \frac{\lvert \O \rvert}{\lvert \O^S \rvert} \leq \frac{1}{1 - 4{\varepsilon_{N}}'} \label{ratio}
\end{equation}
By $(\ref{epsilon assumption}), (\ref{rough estimate})$, $(\ref{ratio estimate})$ and $(\ref{ratio})$, we have
\begin{align}
    \left \lvert \frac{1}{\lvert \O^S \rvert} \O^S(f_i) -  \int_{S^2} f_i \,\omega  \right \rvert  &=  \frac{\lvert \O \rvert}{\lvert \O^S \rvert} \cdot \left \lvert \frac{1}{\lvert \O \rvert} \O^S(f_i) - \frac{\lvert \O^S \rvert}{\lvert \O \rvert}  \int_{S^2} f_i \,\omega  \right \rvert \nonumber \\
    & \leq (4\max_{i \in \{1,\dots,N\}}\lv \lv  \overline{f_i} \rv \rv_{C^0} + 1) \cdot \frac{{\varepsilon_{N}}'}{1 - 4{\varepsilon_{N}}'}\\
    & < \varepsilon_N \nonumber
\end{align}
Note that \( \int_{S^2} f_i \, \omega = \int_{S} f_i \, \omega_0 \).
\end{section}

\subsubsection{Proof of Claim \ref{final claim}}
For $N \geq 1$, let $\H'(N)$ be the set of all $H \in C^{\infty}(\R/\Z \times S^2 ;P)$ such that there exists $\O^S \in \P_{\R}(\Phi^H|_{S})$ such that
\begin{equation}
\left \lvert \frac{1}{\lvert \O^S \rvert} \O^S(f_i) -  \int_{S} f_i \,\omega_0  \right \rvert < \varepsilon_N \label{estimate 2}
\end{equation}
for each $i \in \{1,\dots,N\}$. Let $\H_{good} = \bigcap_{N \geq 1}^{\infty}\H'(N)$. Since $\H(N) \subset \H'(N)$ and $\H(N)$ is open and dense, it follows that $\H_{good}$ is residual in $C^{\infty}(\R/\Z \times S^2 ;P)$. Fix $H \in \cH_{\text{good}}$. Let $\{\O_N\}_{N \geq 1}$ be a sequence of orbit sets of $\Phi^H|_{S}$ such that for each $N \geq 1$ and $i \in \{1,\dots,N\}$,
\begin{equation*}
\left \lvert \frac{1}{\lvert \O_N \rvert} \O_N(f_i) -  \int_{S} f_i \,\omega_0  \right \rvert < \varepsilon_N 
\end{equation*}
Let $f \in C_{0}^{\infty}(S^2;P)$. For each $N \geq 1$, we have
\begin{align*}
    \left \lvert \frac{1}{\lvert \O_N \rvert} \O_N(f) -  \int_{S} f \,\omega_0  \right \rvert \leq &  \inf_{i \in \{1,\dots,N\}} \Bigl( \left \lvert \frac{1}{\lvert \O_N \rvert} \O_N(f_i) -  \int_{S} f_i \,\omega_0  \right \rvert\\
    &+ \left \lvert \frac{1}{\lvert \O_N \rvert} \O_N(f - f_i) -  \int_{S} (f - f_i) \,\omega_0  \right \rvert \Bigr) \\
    & \leq \varepsilon_N + \inf_{i \in \{1,\dots,N\}}\lv \lv f - f_i \rv \rv_{C^0} \left( \left \lvert \frac{1}{\lvert \O_N \rvert} \O_N(1) \right \rvert + 1 \right) \\
    & \leq \varepsilon_N  + 2\inf_{i \in \{1,\dots,N\}}\lv \lv f - f_i \rv \rv_{C^0}
\end{align*}
by the triangle inequality. Taking the limit as \(N \to \infty\), we have
\begin{equation*}
    \lim_{N \to \infty} \left \lvert \frac{1}{\lvert \O_N \rvert} \O_N(f) -  \int_{S} f \,\omega_0  \right \rvert = 0
\end{equation*}
as desired. 

\begin{rem}
Ideally, we would like to use the same formal argument to prove a stronger statement: that a generic element of \(\text{Diff}(S, \omega_0)\) in the strong \(C^{\infty}\)-topology has an equidistributed sequence of orbit sets. However, the challenge arises because the restriction map \(C^{\infty}(S^2, S^2) \to C^{\infty}(S, S^2)\) is not continuous with respect to the strong \(C^{\infty}\)-topology. Consequently, for a \(C^{\infty}\)-small \(H \in C^{\infty}(\mathbb{R}/\mathbb{Z} \times S^2)\), the restriction \(\phi \circ (\varphi_H^{1})|_{S}\) might not be \(C^{\infty}\)-close to \(\phi\) in the strong topology.
\end{rem}

\nocite{*}
\bibliographystyle{alpha}

\begin{thebibliography}{CGPPZ21}

\bibitem[AI16]{Asaoka-Irie}
Masayuki Asaoka and Kei Irie.
\newblock A {$C^\infty$} closing lemma for {H}amiltonian diffeomorphisms of
  closed surfaces.
\newblock {\em Geom. Funct. Anal.}, 26(5):1245--1254, 2016.

\bibitem[CGPPZ21]{CGPPZ}
Dan Cristofaro-Gardiner, Dan Pomerleano, Rohil Prasad, and Boyu Zhang.
\newblock A note on the existence of {$U$}-cyclic elements in periodic floer
  homology.
\newblock \href{https://arxiv.org/abs/2110.13844}{arXiv:2110.13844}, 2021.

\bibitem[CGPZ21]{CGPZ}
Dan Cristofaro-Gardiner, Rohil Prasad, and Boyu Zhang.
\newblock Periodic {F}loer homology and the smooth closing lemma for
  area-preserving surface diffeomorphisms.
\newblock \href{https://arxiv.org/abs/2110.02925}{arXiv:2110.02925}, 2021.

\bibitem[Che21]{Chen2}
Guanheng Chen.
\newblock Closed-open morphisms on periodic {F}loer homology.
\newblock \href{https://arxiv.org/abs/2111.11891}{arXiv:2111.11891}, 2021.

\bibitem[Che22]{Chen}
Guanheng Chen.
\newblock On {PFH} and {HF} spectral invariants.
\newblock \href{https://arxiv.org/abs/2209.11071}{arXiv:2209.11071}, 2022.

\bibitem[EH21]{EH}
Oliver Edtmair and Michael Hutchings.
\newblock {PFH} spectral invariants and {$C^\infty$} closing lemmas.
\newblock \href{https://arxiv.org/abs/2110.02463}{arXiv:2110.02463}, 2021.

\bibitem[Fat80]{Fathi}
A.~Fathi.
\newblock Structure of the group of homeomorphisms preserving a good measure on
  a compact manifold.
\newblock {\em Ann. Sci. \'Ecole Norm. Sup. (4)}, 13(1):45--93, 1980.

\bibitem[GLCP23]{GLCP}
Pierre-Antoine Guih\'eneuf, Patrice Le~Calvez, and Alejandro Passeggi.
\newblock Area preserving homeomorphisms of surfaces with rational rotation
  direction.
\newblock {\em preprint}, 2023.

\bibitem[Hir94]{Hirsch}
Morris~W. Hirsch.
\newblock {\em Differential topology}, volume~33 of {\em Graduate Texts in
  Mathematics}.
\newblock Springer-Verlag, New York, 1994.
\newblock Corrected reprint of the 1976 original.

\bibitem[Iri21]{Irie}
Kei Irie.
\newblock Equidistributed periodic orbits of {$C^\infty$}-generic
  three-dimensional {R}eeb flows.
\newblock {\em J. Symplectic Geom.}, 19(3):531--566, 2021.

\bibitem[Oh06]{Oh}
Yong-Geun Oh.
\newblock {$C^0$}-coerciveness of moser’s problem and smoothing area
  preserving homeomorphisms.
\newblock \href{https://arxiv.org/abs/math/0601183}{arXiv:0601183}, 2006.

\bibitem[OM07]{Oh-Muller}
Yong-Geun Oh and Stefan Müller.
\newblock The group of {H}amiltonian homeomorphisms and {$C^0$}-symplectic
  topology.
\newblock {\em J. Symplectic Geom.}, 5(2):167--219, 2007.

\bibitem[PP22]{PP}
Abror Pirnapasov and Rohil Prasad.
\newblock Generic equidistribution for area-preserving diffeomorphisms of
  compact surfaces with boundary.
\newblock \href{https://arxiv.org/abs/2211.07548}{arXiv:2211.07548}, 2022.

\bibitem[PR83]{Pugh-Robinson}
Charles~C. Pugh and Clark Robinson.
\newblock The {$C\sp{1}$}\ closing lemma, including {H}amiltonians.
\newblock {\em Ergodic Theory Dynam. Systems}, 3(2):261--313, 1983.

\bibitem[Pra21]{Prasad}
Rohil Prasad.
\newblock Generic equidistribution of periodic orbits for area-preserving
  surface maps.
\newblock \href{https://arxiv.org/abs/2112.14601}{arXiv:2112.14601}, 2021.

\bibitem[Pra23]{Prasad2}
Rohil Prasad.
\newblock Periodic points of rational area-preserving homeomorphisms.
\newblock \href{https://arxiv.org/abs/2305.05876}{arXiv: 2305.05876}, 2023.

\bibitem[Sch57]{Schwartzman}
Sol Schwartzman.
\newblock Asymptotic cycles.
\newblock {\em Ann. of Math. (2)}, 66:270--284, 1957.

\bibitem[Sma00]{Samle}
Steve Smale.
\newblock Mathematical problems for the next century.
\newblock In {\em Mathematics: frontiers and perspectives}, pages 271--294.
  Amer. Math. Soc., Providence, RI, 2000.

\end{thebibliography}

\end{document}